\makeatletter\@addtoreset {equation}{section}\makeatother
\theoremstyle{plain}
\newtheorem{definition}{Definition}[section]
\newtheorem{theorem}[definition]{Theorem}
\newtheorem{lemma}[definition]{Lemma}
\newtheorem{remark}[definition]{Remark}
\DeclareMathOperator{\sech}{sech}
\DeclareMathOperator\arctanh{arctanh}
\newcommand*\diff{\mathop{}\!\mathrm{d}}
\newenvironment{proof1}%
{\begin{trivlist} \item[]{\em Proof }}%
{\hspace*{\fill}$\rule{.3\baselineskip}{.35\baselineskip}$\end{trivlist}}
\begin{document}

\title[Orbital instability of standing waves \\
for NLS equation on Star Graphs]
{Orbital instability of standing waves \\
for NLS equation on Star Graphs}

\author{Adilbek Kairzhan}
\address{Department of Mathematics, McMaster University, Hamilton, Ontario  L8S 4K1, Canada}
\email{kairzhaa@math.mcmaster.ca}

\date{\today}

\begin{abstract}
We consider a nonlinear Schr\"{o}dinger (NLS) equation with any positive power nonlinearity on a star graph $\Gamma$ ($N$ half-lines glued at the common vertex) with a $\delta$ interaction at the vertex. The strength of the interaction is defined by a fixed value 
$\alpha \in \mathbb{R}$. In the recent works of Adami {\it et al.}, 
it was shown that for $\alpha \neq 0$ the NLS equation on $\Gamma$ admits the unique symmetric 
(with respect to permutation of edges) standing wave and that all other possible standing waves are nonsymmetric. Also, it was proved
for $\alpha<0$ that, in the NLS equation with a subcritical power-type nonlinearity, the unique symmetric standing wave is orbitally stable. 

In this paper, we analyze stability of standing waves for both $\alpha<0$ and $\alpha>0$. By extending the Sturm theory to Schr\"{o}dinger operators on the star graph, we give the explicit count of the Morse and degeneracy indices for each standing wave. 
For $\alpha<0$, we prove that all nonsymmetric standing waves in the NLS equation with any positive power nonlinearity are orbitally unstable. 
For $\alpha>0$, we prove the orbital instability of all standing waves. 
\end{abstract}

\maketitle
\section{Introduction}

The study of the nonlinear Schr\"{o}dinger (NLS) equation on different graph models is a continuously developing subject 
(see e.g. \cite{Kuchment})
motivated by various physical experiments involving wave propagation in narrow waveguides \cite{Beck,Joly1,Kuch}. In this context, graph models consisting of edges and vertices might arise as an approximation of a multi-dimensional narrow waveguides when their thickness parameters converge to zero \cite{Costa, Sobirov2}. 

In the last decade the NLS equation has been extensively studied in the context of existence and stability of standing waves on both compact (all edges are of finite lengths) and noncompact graphs. The well-known example of a compact graph is the dumbbell graph, i.e. the graph constructed by attaching two rings to a central line segment. In \cite{MP}, authors considered standing waves in the focusing NLS equation on the dumbbell graph and investigated the existence of the ground state (the standing wave that minimize the energy under the a fixed mass constraint) using methods of bifurcation theory. It has been proven that for small values of mass the ground state is given by a constant solution, whereas for larger mass values the constant solution bifurcates generating two standing waves, one of which is asymmetric and has the lowest energy at the fixed mass near the symmetry breaking bifurcation, and another standing wave is symmetric and not a ground state. The analytical results also were supported by numerical computations. Later, in \cite{Goodman}, the symmetry-preserving bifurcation described in \cite{MP} was studied in details.

The question of existence of ground states also has been raised for the NLS equation on noncompact graphs. The sufficient topological conditions for the nonexistence of ground states was given in \cite{AdamiCV} using the variational approach. In particular, it was proven that the infimum of the energy at the fixed mass $\mu$ for the NLS equation on a noncompact graph is equal to the infimum of the energy for the NLS on $\mathbb{R}$ at the same fixed mass $\mu$. Such infimum is never achieved if the noncompact graph after removal of any edge contains an edge of infinite length in every connected component with some exceptions.
In \cite{AdamiJFA}, authors gave another set of sufficient conditions, mostly based on metric properties of noncompact graphs, which guarantee the existence or nonexistence of groud states. 

Further study of the existence and stability of standing waves on graphs include, but not limited to, the works related to the cubic NLS equation on the periodic graph \cite{GilgPS}, the cubic NLS equation on the tadpole graph \cite{NojaPR} and its extension for NLS with any positive power nonlinearity \cite{NojaN}. 

Among the limitless amount of possible graph models, our particular interest is in the class of {\it star graphs}, which are constructed by gluing together $N$ half-lines (edges) at a common vertex.
Each edge can be regarded as 
$\mathbb{R}^+$, and the vertex is placed at the origin. Let $\Gamma$ represent a star graph. 
The Hilbert space on the graph $\Gamma$ is 
\begin{equation}
\label{L2}
L^2(\Gamma) = \oplus_{j=1}^N L^2(\mathbb{R}^+).
\end{equation}
Elements in $L^2(\Gamma)$ are function vectors 
$\Psi = (\psi_1, \psi_2,..., \psi_N)^T$ with each component $\psi_j \in L^2(\mathbb{R}^+)$ defined on the $j$-th edge of $\Gamma$.
We also introduce the following $L^2$-based Sobolev space on $\Gamma$
\begin{equation}
\label{Hk}
H^2(\Gamma) =  \oplus_{j=1}^N H^2(\mathbb{R}^+)
\end{equation}
equipped with appropriate boundary conditions at the vertex. 

The present work consider the NLS equation on $\Gamma$ with $\delta$-type interaction at the vertex
\begin{equation} 
\label{eq1}
i \frac{\partial \Psi}{\partial t} = - \Delta \Psi - (p+1) |\Psi|^{2p} \Psi, \quad 
\Psi \in \mathcal{D}(\Delta), \quad
t \in \mathbb{R}, \quad x \in \Gamma,
\end{equation}
where $p>0$, $\Psi=\Psi(t,x) \in \mathbb{C}^N$, $\Delta$ is the Laplacian operator with domain $\mathcal{D}(\Delta)$ acting as
$\Delta \Psi = (\psi_1'', \psi_2'',..., \psi_N'')^T$ with primes standing for derivatives in $x$,
and the nonlinearity is defined as
$|\Psi|^{2p}\Psi = 
(|\psi_1|^{2p}\psi_1, |\psi_2|^{2p}\psi_2,..., |\psi_N|^{2p}\psi_N)^T$.  The domain of the Laplacian is 
\begin{equation}
\label{H2}
\mathcal{D}(\Delta) := \left\{ \Psi \in H^2(\Gamma): \ \psi_1(0) = \dots = \psi_N(0), \
\sum_{j=1}^N  \psi_j'(0) = \alpha \psi_1(0) \right\}.
\end{equation}
where the prime denotes the right-side derivative in $x$. 
The parameter $\alpha$ incorporated in the definition of 
$\mathcal{D}(\Delta)$ defines the strenght of the vertex interaction. In the physical context, $\alpha<0$ refers to the presence of a potential well at the vertex and represents an attractive delta interaction, whereas $\alpha>0$ means the existence of a potential barrier and is associated with repulsive delta interaction. In case of $\alpha=0$, the boundary conditions in (\ref{H2}) are known as {\it Kirchhoff} and correspond to the free flow at the vertex. 

For every $p>0$, the Cauchy problem for the NLS equation (\ref{eq1}) is locally well-posed and its solutions conserve energy and mass,
see Propositions 2.1 and 2.2 in \cite{ACDD}.
The mass and energy conservations are motivated by the invariance of the NLS equation (\ref{eq1}) under the gauge transformation 
$\Psi \mapsto e^{i \theta}\Psi$ with $\theta \in \mathbb{R}$ and under the time translation $\Psi(t, \cdot) \mapsto \Psi(t+\tau, \cdot)$ with $\tau \in \mathbb{R}$, respectively. The conservation of a momentum functional is generally false since the boundary conditions at the vertex given in (\ref{H2}) break the translation symmetry in $\Gamma$. However, the momentum might be conserved under appropriate conditions. As an example, see Section 6 in \cite{KP2}, where authors considered the NLS equation (\ref{eq1}) on the star graph with even number of edges and free flow at the vertex ($\alpha = 0$).

In series of papers \cite{AdamiAH, ACDD, ACDD2}, Adami, Cacciapuoti, Finco, and Noja analyzed variational properties of standing waves in the NLS equation (\ref{eq1}) on a star graph $\Gamma$ with $N$ edges. For every $\alpha \in \mathbb{R}$, the existence and explicit formulations of standing waves were shown. In particular, for $\alpha<0$, authors found that the NLS equation (\ref{eq1}) admits the unique symmetric (with respect to permutation of edges) standing wave $\Psi_{\omega, 0}$ and that all other possible standing waves are nonsymmetric. By using the well-known stability results in \cite{Lions} and \cite{GSS}, 
in case of a subcritical nonlinearity in (\ref{eq1}) and the presence of attractive delta interaction at the vertex 
($\alpha<0$) authors proved that $\Psi_{\omega, 0}$ is orbitally stable. The proof was based on solving the (global and local) minimization problem for the NLS energy constrained by the fixed mass. For sufficiently small mass below the critical value, it was shown that a global minimizer coincides with $\Psi_{\omega, 0}$, whereas for a large mass above the critical value, $\Psi_{\omega, 0}$ is a local minimizer. 
Later, similar stability results were obtained in \cite{PG} using different approach, mostly based on the extension theory of symmetric operators. 

In case of $\alpha<0$, the energy of each nonsymmetric standing wave is higher than the energy of $\Psi_{\omega, 0}$, and such nonsymmetric standing waves are also known as {\it excited states} \cite{ACDD}. In 2014 Diego Noja published a valuable manuscript \cite{Noja} which emphasised some of the recent (2010s) results in the study of NLS on graphs and discussed various open problems. In particular, the author conjectured the orbital instability of excited states and raised the question of asymptotic stability of standing waves. The latter problem on asymptotic stability was partially answered in \cite{Li}. 

In the present work, we consider the NLS equation with any positive power nonlinearity on a star graph $\Gamma$ with the interaction at the vertex of an arbitrary strength $\alpha \neq 0$. In case of $\alpha<0$, we prove that every excited state is orbitally unstable which solves the problem conjectured in \cite{Noja}. Also, in \cite{Noja}, it has been noted that the difficulty which arises in the analysis of orbital stability/instability of excited states is to get the exact count of the Morse index for nonsymmetric standing waves. We overcome such difficulty by extending the Sturm theory to Schr\"{o}dinger operators on star graphs. Moreover, we also consider $\alpha>0$ and show the orbital instability of {\it all} standing waves in this case. 
Similar results were obtained independently in \cite{PG2} using the theory of symmetric operators, see Remark \ref{PGremark} below.

The paper is organized as follows. Section 2 gives the structure and the explicit representation of stationary states for the NLS equation (\ref{eq1}) with nonzero $\alpha$, and provides the reader with illustrations of stationary states on the star graph with $N=3$ edges. The main results on orbital instability of standing waves are introduced in Section 3 and proved in Section 4.

\section{Stationary states of the NLS equation}

Stationary states of the NLS equation (\ref{eq1}) are defined by the standing wave solutions of the form
$$
\Psi(t,x) = e^{i \omega t} \Phi_\omega(x),
$$
where each pair $(\omega, \Phi_\omega) \in \mathbb{R} \times \mathcal{D}(\Delta)$ is a real-valued solution of the stationary NLS equation
\begin{equation}
\label{stationary_NLS}
-\Delta \Phi_\omega - (p+1)|\Phi_\omega|^{2p}\Phi_\omega = -\omega\Phi_\omega.
\end{equation}
By Sobolev's embedding theorem, $\Phi_\omega \in \mathcal{D}(\Delta)$ requires $\Phi_\omega(x), \Phi_\omega'(x) \to 0$ as $x\to \infty$. Therefore, we consider only the cases with $\omega>0$ which entail the existence of the exponential decaying solutions as $x \to \infty$.

In case of $\alpha = 0$, the stationary NLS equation (\ref{stationary_NLS}) has either the unique solution or a one-parameter family of solutions, depending on the number of edges $N$ in $\Gamma$. For odd values of $N$, the solution is uniquely given by so-called ``half-solitons'', whereas for even $N$ the stationary states are represented by families of solitary waves, so-called 
``shifted states'', parametrized by a translational parameter, see \cite{ACDD}, \cite{KP1}, \cite{KP2}. The stability results on both half-solitons and shifted states are given in \cite{KP1} and \cite{KP2}.

In case of $\alpha \neq 0$, the stationary state $\Phi_{\omega, 0}$ exists if the frequency $\omega$ exceeds the lower bound 
$\omega > \frac{\alpha^2}{N^2}$ \cite{ACDD}. Other stationary states appear when $\omega$ exceeds certain bifurcation values. 
For any $K = 0,...,\left[ \frac{N-1}{2} \right]$, there exists a solution 
$\Phi_{\omega,K}$ to the stationary NLS equation (\ref{stationary_NLS}) if the condition $\omega > \frac{\alpha^2}{(N-2K)^2}$ is satisfied. The set 
$\{\Phi_{\omega, 0}, \Phi_{\omega, 1}, \Phi_{\omega, \left[ {(N-1)}/{2} \right]}\}$ represents all possible stationary states of the NLS equation (\ref{eq1}). For sufficiently large values of $\omega$, all 
$\left[ \frac{N-1}{2} \right]+1$ stationary states are present. 
The explicit representation of these stationary states is given in the next lemma proved by Theorem 4 and Remarks 5.1-5.2 in \cite{ACDD}.

\begin{lemma} 
\label{stationary}
Let $p>0$, $\alpha \in \mathbb{R} \backslash \{0\}$ and 
$K = 0,...,\left[ \frac{N-1}{2} \right]$. Then, if the condition 
$\omega > \frac{\alpha^2}{(N-2K)^2}$ is satisfied, there exists a solution
$\Phi_{\omega, K}$ to the stationary NLS equation (\ref{stationary_NLS}) 
given, up to permutations of edges, by
\begin{equation}
\label{stat_sol}
(\Phi_{\omega, K})_j(x) = \begin{cases} 
\phi_\omega(x+a_K), \quad j = 1,...,K \\
\phi_\omega(x-a_K), \quad j = K+1,...,N
\end{cases}
\end{equation}
where 
$\phi_\omega(x) = \omega^{1/2p} \sech^{1/p}(p \sqrt{\omega}x)$ and 
$a_K = \frac{1}{p\sqrt{\omega}} \arctanh \left( \frac{\alpha}{(N-2K)\sqrt{\omega}} \right)$. 
\end{lemma}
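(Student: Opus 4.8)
The plan is to solve (\ref{stationary_NLS}) edge by edge and then reassemble the pieces using the vertex conditions encoded in $\mathcal{D}(\Delta)$. On the open half-line each component decouples, so that $(\Phi_\omega)_j =: \psi_j$ must satisfy the scalar ODE
\[
-\psi_j'' + \omega \psi_j = (p+1) |\psi_j|^{2p} \psi_j, \qquad x>0 .
\]
First I would record the standard phase-plane analysis of this autonomous equation: multiplying by $\psi_j'$ and integrating yields the conserved quantity $\tfrac12 (\psi_j')^2 - \tfrac{\omega}{2}\psi_j^2 + \tfrac12 \psi_j^{2p+2}$, whose zero level set is the homoclinic orbit to the origin. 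Since $\Phi_\omega \in \mathcal{D}(\Delta)$ forces $\psi_j,\psi_j' \to 0$ as $x\to\infty$, the only admissible nontrivial profiles on the $j$-th edge are the translates $\psi_j(x) = \phi_\omega(x - b_j)$ of the soliton $\phi_\omega(x) = \omega^{1/2p}\sech^{1/p}(p\sqrt{\omega}x)$, with the shift $b_j \in \mathbb{R}$ as the single free parameter.

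Next I would impose continuity at the vertex. Because $\phi_\omega$ is even and strictly decreasing on $[0,\infty)$, the equalities $\phi_\omega(-b_1) = \dots = \phi_\omega(-b_N)$ force $|b_1| = \dots = |b_N| =: a_K$, so every edge carries either $\phi_\omega(\cdot + a_K)$ or $\phi_\omega(\cdot - a_K)$. Up to a permutation of edges the configuration is therefore determined by the number $K$ of edges carrying $\phi_\omega(\cdot + a_K)$, and the symmetry $a_K \mapsto -a_K$ (which interchanges the two groups) lets me assume $K \le N-K$. This yields the range $0 \le K \le \lfloor (N-1)/2 \rfloor$, the value $N = 2K$ being discarded for a reason that surfaces at the next step.

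The decisive step is the flux condition $\sum_{j=1}^N \psi_j'(0) = \alpha \psi_1(0)$. Using $\phi_\omega'(x) = -\sqrt{\omega}\tanh(p\sqrt{\omega}x)\phi_\omega(x)$ together with the evenness of $\phi_\omega$ and the oddness of $\tanh$, the $K$ edges and the $N-K$ edges contribute derivatives of opposite sign at $x=0$, and after cancelling the common nonzero factor $\phi_\omega(a_K)$ the condition collapses to
\[
(N-2K)\sqrt{\omega}\,\tanh\!\big(p\sqrt{\omega}\,a_K\big) = \alpha .
\]
Solving for $a_K$ produces exactly $a_K = \tfrac{1}{p\sqrt{\omega}}\arctanh\!\big(\tfrac{\alpha}{(N-2K)\sqrt{\omega}}\big)$, and this has a (unique) real solution precisely when the argument of $\arctanh$ lies in $(-1,1)$, i.e. when $|\alpha| < (N-2K)\sqrt{\omega}$, which is the threshold $\omega > \tfrac{\alpha^2}{(N-2K)^2}$ in the statement. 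The excluded case $N=2K$ would demand $0=\alpha$, impossible for $\alpha \neq 0$, which explains the upper limit on $K$.

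I expect the one genuinely delicate point to be the classification in the first step — verifying that translated solitons exhaust the decaying profiles on a half-line, with no sign-changing or non-monotone alternatives sneaking in — whereas the continuity and flux computations are essentially mechanical once the explicit form of $\phi_\omega'$ is in hand, and the solvability threshold for $\omega$ then drops out automatically from the domain of $\arctanh$.
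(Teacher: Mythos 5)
Your proof is correct, and it is essentially the argument behind the paper's own source for this result: the paper offers no proof of Lemma \ref{stationary}, instead citing Theorem 4 and Remarks 5.1--5.2 of \cite{ACDD}, where the same edge-by-edge scheme is carried out --- phase-plane classification of the decaying half-line profiles as soliton translates, continuity at the vertex forcing equal-magnitude shifts, and the flux condition collapsing to $\tanh(p\sqrt{\omega}\,a_K)=\alpha/\big((N-2K)\sqrt{\omega}\big)$, which yields the $\arctanh$ formula and the threshold $\omega>\alpha^2/(N-2K)^2$. The only detail worth adding is that the zero-energy level set also contains the negative solitons $-\phi_\omega(\cdot-b)$ (and the trivial profile); continuity at the vertex forces a common sign across all edges and excludes mixing with the zero profile, so the only configurations you omit are $-\Phi_{\omega,K}$, which is immaterial for the existence claim of the lemma.
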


By the represention (\ref{stat_sol}), the profile of $\Phi_{\omega, K}$ on each edge of the graph $\Gamma$ is either {\it a bump} (nonmonotonic profile) or {\it a tail} (monotonic profile). The presence of such bumps or tails on the edges depends on the shift $a_K$. When $\alpha<0$, the shift value $a_K$ is negative. Therefore, 
the solution $\phi_\omega(x+a_K)$ on each edge $1,...,K$ in 
(\ref{stat_sol}) is nonmonotonic and represents a bump, whereas the solution $\phi_\omega(x-a_K)$ on each edge $K+1,...,N$ is monotonic and represents a tail. Notice that, in this case, the number of bumps in the profile of $\Phi_{\omega, K}$ is equal to $K$, and since $K<\frac{N}{2}$, there are strictly more tails than bumps. The unique symmetric standing wave $\Psi_{\omega, 0}$ described in Introduction corresponds to the stationary state $\Phi_{\omega, 0}$, while the excited states (nonsymmetric standing waves) are defined by stationary states with bumps.
As an example, if $N=3$ then $K \in \{0,1\}$ in Lemma \ref{stationary}, and there are only two possible stationary states, namely, 
$\Phi_{\omega, 0}$ and $\Phi_{\omega, 1}$, see Figure 1.

\begin{figure}[htb]
\begin{center}
\includegraphics[height=2cm,width=6cm]{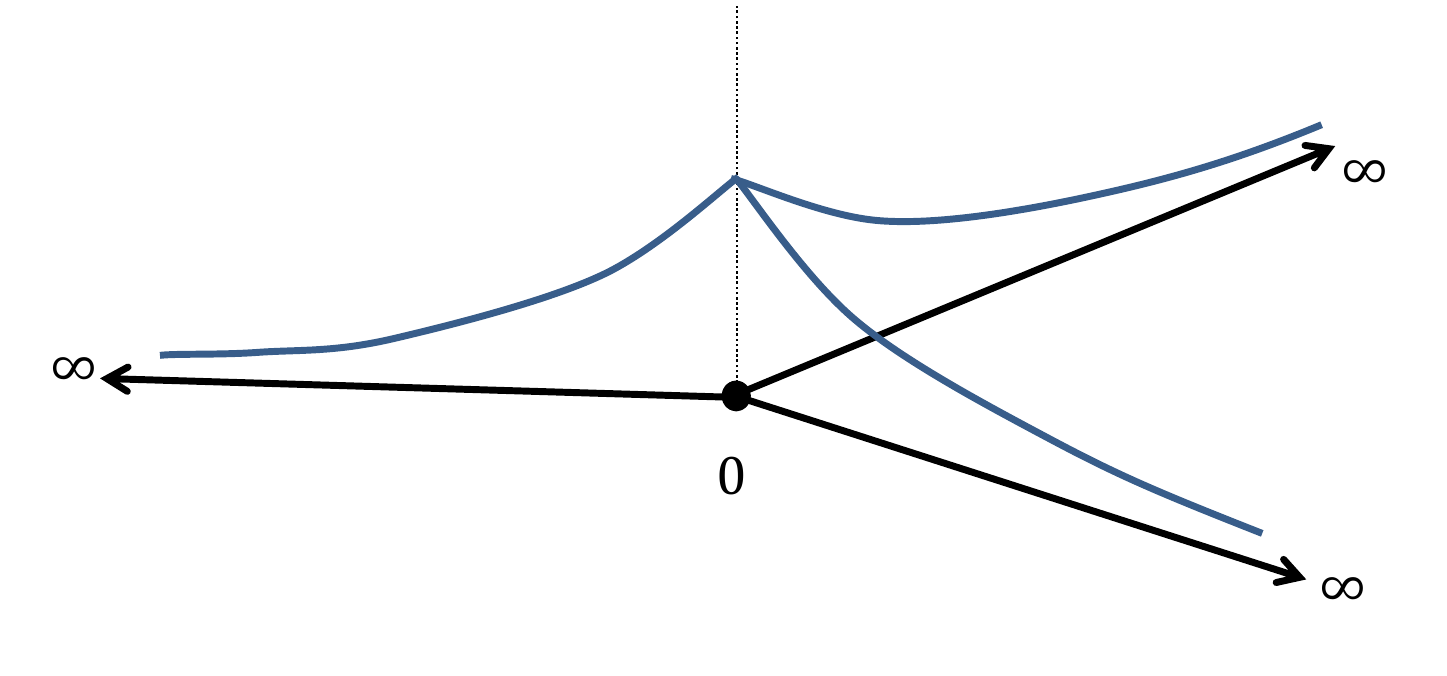}
\includegraphics[height=2cm,width=6cm]{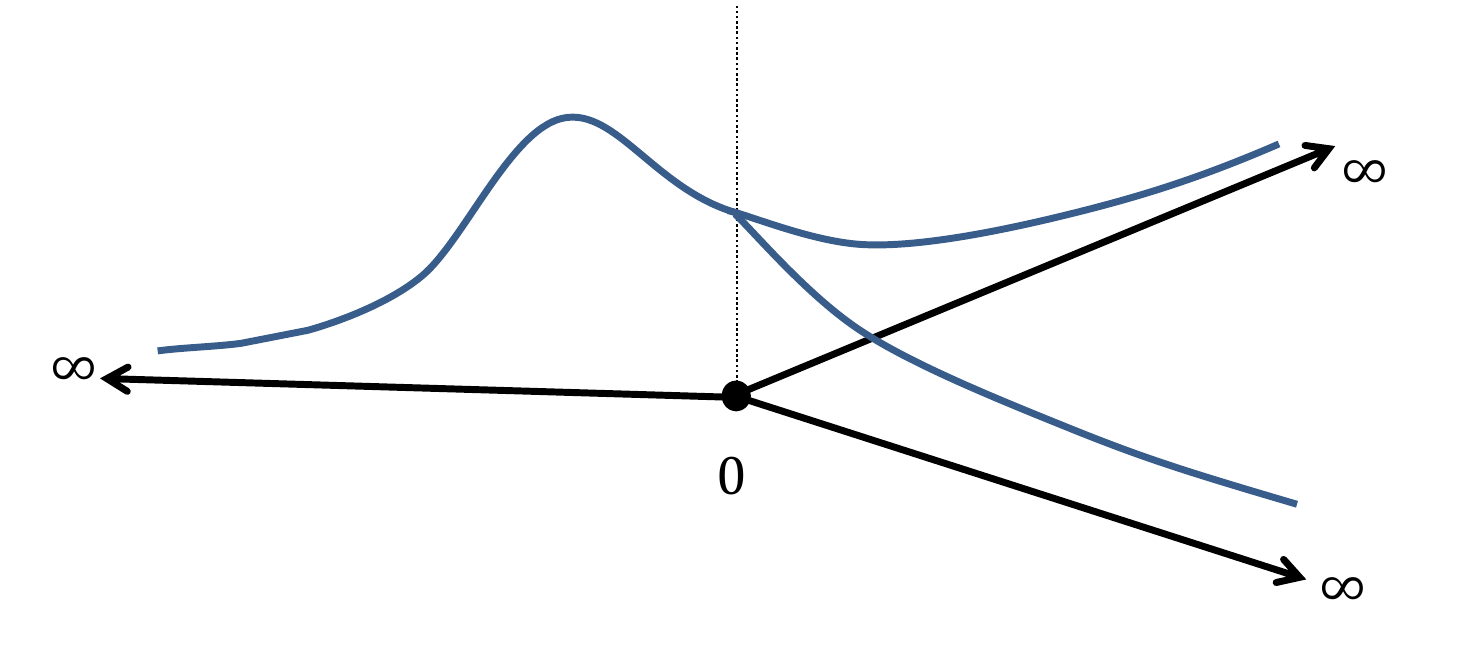}
\end{center}
\caption{Case $\alpha<0$ and $N=3$: 
$\Phi_{\omega, 0}$ has three tails and no bumps (left) and $\Phi_{\omega, 1}$ has two tails and one bump (right).}
\label{fig-1}
\end{figure}

When $\alpha>0$, the shift value $a_K$ is positive, and $K$ represents the number of tails in the profile of $\Phi_{\omega, K}$, where tails appear on the edges $1,...,K$ only according to the representation (\ref{stat_sol}). Here, in contrast to the case with negative $\alpha$, the number of tails is strictly less than the number of bumps. For $N=3$, Figure 2 illustrates the only possible stationary states $\Phi_{\omega, 0}$ and $\Phi_{\omega, 1}$.

\begin{figure}[htb]
\begin{center}
\includegraphics[height=2cm,width=6cm]{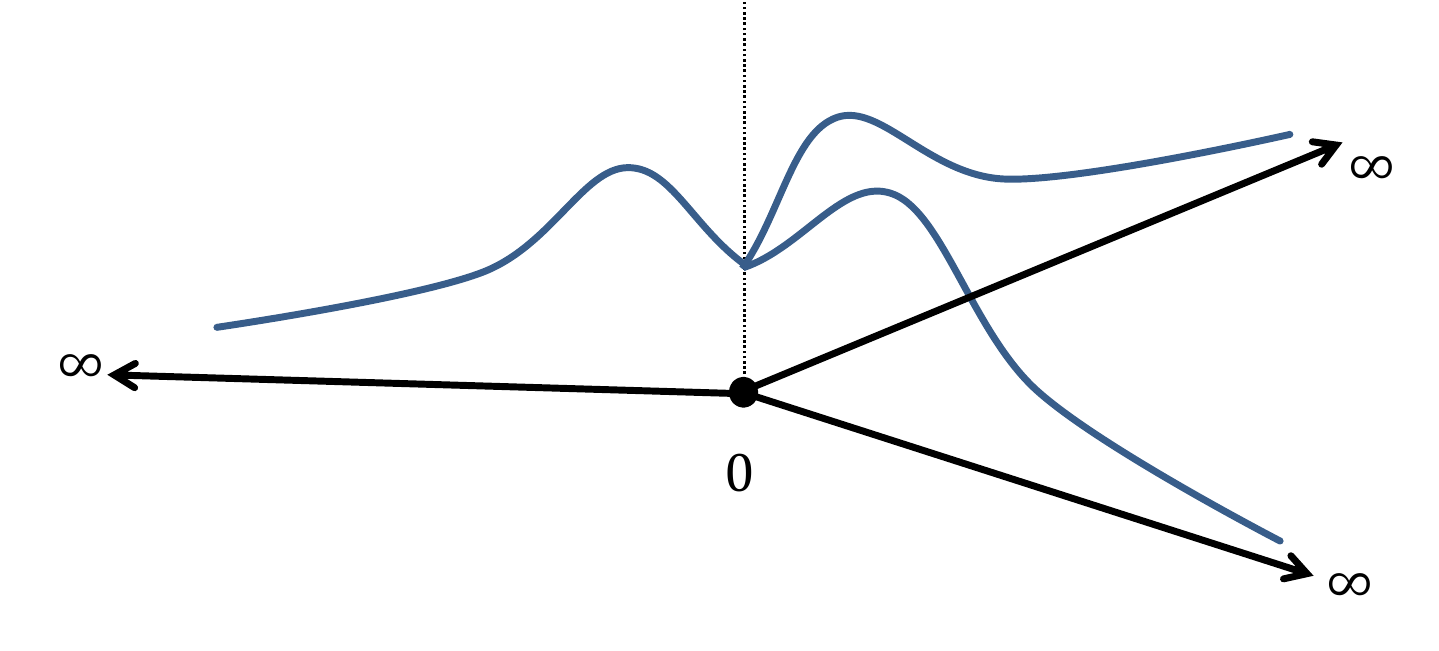}
\includegraphics[height=2cm,width=6cm]{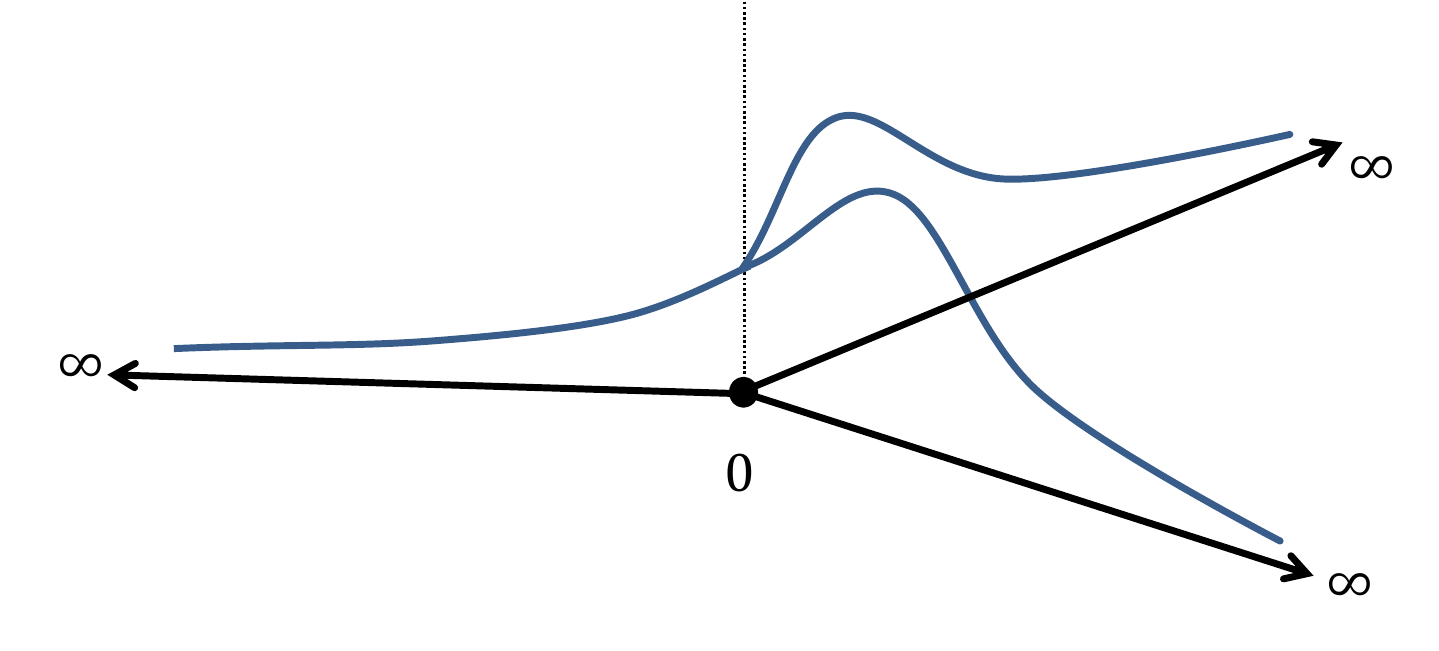}
\end{center}
\caption{Case $\alpha>0$ and $N=3$: 
$\Phi_{\omega, 0}$ has no tails and three bumps (left) and $\Phi_{\omega, 1}$ has one tails and two bumps (right)}
\label{fig-1}
\end{figure}

\section{Main results}
To investigate the stability of each stationary state $\Phi_{\omega, K}$ given by Lemma \ref{stationary}, consider its complex perturbation
$
\Psi_\omega = \Phi_{\omega} + U +iW,
$
where $U, W \in \mathcal{D}(\Delta)$ are real valued. The linearization of Hamiltonian system corresponding to the NLS equation (\ref{eq1}) implies that the time evolution of the perturbations $U$ and $W$ is
\begin{equation}
\frac{d}{dt} \left[ \begin{array}{c} U \\ W \end{array} \right] =
\left[ \begin{array}{cc} 0 & L_- \\ -L_+ & 0 \end{array} \right] \left[ \begin{array}{c} U \\ W \end{array} \right],
\end{equation}
where $L_\pm$ are Hessian operators in $L^2(\Gamma)$ with domain 
$\mathcal{D}(\Delta)$ with differentiable expression given by
\begin{eqnarray}
\label{Lplus}
L_+ & := & -\Delta + \omega - (2p+1) (p+1) \Phi_{\omega,K}^{2p} \\
L_- & := & -\Delta + \omega - (p+1) \Phi_{\omega,K}^{2p} ,
\label{Lminus}
\end{eqnarray}

Note that the operators $L_+$ and $L_-$ are self-adjoint in 
$L^2(\Gamma)$, see Theorem 1.4.4 in \cite{Kuchment}. Therefore, the spectrum $\sigma(L_\pm) \subset \mathbb{R}$ consists of the continuous and the discrete spectra only. By Weyl's theorem, see e.g. \cite{HS}, 
$\sigma_c(L_\pm) = \sigma_c(L_0)$, where $\sigma_c$ stands for the continuous spectrum and $L_0:= -\Delta + \omega$. Indeed, 
$L_\pm -L_0$ is a relatively compact perturbation to $L_0$, and Weyl's theorem is applicable. Therefore, $\sigma_c(L_\pm) = [\omega, \infty)$, and all eigenvalues of the discrete specrum $\sigma_p(L_\pm)$ are located in the interval $(-\infty, \omega)$. Define the Morse index $n(L_\pm)$ and the degeneracy index $z(L_\pm)$ as the number of negative and zero eigenvalues of $\sigma_p(L_\pm)$, respectively, with the account of their multiplicity.

The main result of this paper is that, for every $\alpha \neq 0$, we count the Morse and degeneracy indices of $L_\pm$ associated with the stationary state 
$\Phi_{\omega,K}$ for all possible $K \geq 0$, as in the following:

\begin{theorem}
\label{thm1}
Let $p>0$, $\alpha \in \mathbb{R} \backslash \{0\}$, 
$K = 0,...,\left[ \frac{N-1}{2} \right]$ and 
$\omega > \frac{\alpha^2}{(N-2K)^2}$. Let $L_+$ and $L_-$ be the Hessian operators associated with $\Phi_{\omega, K}$ and defined as in (\ref{Lplus})-(\ref{Lminus}). Then, \\
(i) $z(L_-) = 1$, $n(L_-)=0$; \\
(ii) $z(L_+)=0$ and 
$n(L_+)= \begin{cases} K+1 \quad {\rm if} \quad \alpha<0; \\ 
N-K \quad {\rm if} \quad \alpha>0.
\end{cases}$ \\
In addition, \\
$\bullet$ if $\alpha< 0$ with $K \geq 1$, $n(L_+)$ consists of two simple eigenvalues $\lambda_1<\lambda_2$ and another eigenvalue $\lambda_* \in (\lambda_1, \lambda_2)$ of multiplicity $K-1$; \\
$\bullet$ if $\alpha>0$ with $K\geq 0$, $n(L_+)$ consists a simple eigenvalue $\lambda_1$ and another eigenvalue $\lambda_{**} \in (\lambda_1, 0)$ of multiplicity $N-K-1$. 
\end{theorem}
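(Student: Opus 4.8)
The plan is to handle $L_-$ by a positivity argument and $L_+$ by exploiting the permutation symmetry of the configuration and then applying half-line Sturm oscillation to each resulting block. For part (i), differentiating the stationary equation (\ref{stationary_NLS}) in the equivalent form $L_-\Phi_{\omega,K}=0$ shows that $\Phi_{\omega,K}\in\ker L_-$. Since $\Phi_{\omega,K}$ is strictly positive on every edge by Lemma \ref{stationary} (the factor $\sech^{1/p}$ never vanishes), the extension of the Perron--Frobenius/Sturm principle to the Schr\"odinger operator $L_-$ on $\Gamma$ forces $0$ to be the lowest eigenvalue and to be simple, so that $n(L_-)=0$ and $z(L_-)=1$. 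The only delicate point here is to verify that the vertex conditions in (\ref{H2}) are compatible with the maximum principle, so that a nonnegative ground state cannot be spuriously annihilated at the vertex.

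For part (ii), the key structural observation is that the potential in $L_+$ takes only two values across the edges: $q_A:=-(2p+1)(p+1)\phi_\omega(\cdot+a_K)^{2p}$ on the $K$ edges of type A and $q_B:=-(2p+1)(p+1)\phi_\omega(\cdot-a_K)^{2p}$ on the $N-K$ edges of type B. Hence $L_+$ commutes with the symmetry group $S_K\times S_{N-K}$, and I would block-diagonalise accordingly. The standard representation of $S_K$ produces a sector of multiplicity $K-1$ in which the common vertex value is forced to vanish (the vectors $c_jw_A(x)$ with $\sum c_j=0$ satisfy continuity only if $w_A(0)=0$, and then the flux condition holds automatically), so $L_+$ reduces there to the half-line operator $-\partial_x^2+\omega+q_A$ with a Dirichlet condition; likewise $S_{N-K}$ gives a multiplicity-$(N-K-1)$ Dirichlet sector for $q_B$. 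The remaining, fully symmetric sector consists of vectors $(f,\dots,f,g,\dots,g)$ and reduces $L_+$ to a two-edge problem with the coupled vertex conditions $f(0)=g(0)$ and $Kf'(0)+(N-K)g'(0)=\alpha f(0)$, whose eigenvalues are simple in $L^2(\Gamma)$.

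To count negative eigenvalues in the Dirichlet sectors I would use half-line Sturm oscillation at energy $0$, for which an explicit zero-energy solution is the translated soliton derivative $\phi_\omega'(\cdot\pm a_K)$. On a bump edge this solution has exactly one interior zero (at the peak), whereas on a tail edge it has none, and the graph Sturm count then yields exactly one negative Dirichlet eigenvalue on a bump and none on a tail. Because $\phi_\omega'(\pm a_K)\neq0$ for $a_K\neq0$, this decaying solution cannot meet the Dirichlet condition, so these sectors contribute no zero eigenvalue. This already produces the multiplicity-$(K-1)$ eigenvalue $\lambda_*$ when $\alpha<0$ (type A is then the bump) and the multiplicity-$(N-K-1)$ eigenvalue $\lambda_{**}$ when $\alpha>0$ (type B is then the bump), with the tail sectors contributing nothing negative.

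The main obstacle is the symmetric two-edge block, where the dependence on the sign of $\alpha$ enters. Here I would take the zero-energy solution decaying on both half-lines, $f=\phi_\omega'(\cdot+a_K)$ and $g=\phi_\omega'(\cdot-a_K)$ up to independent scalings, normalise it so that the continuity condition $f(0)=g(0)$ holds, and then read off the Morse index from the extended Sturm theorem as the number of interior zeros of $(f,g)$ plus a vertex correction determined by the sign of the flux defect $\mathcal F:=Kf'(0)+(N-K)g'(0)-\alpha f(0)$. A Wronskian computation should give $\mathcal F\neq0$ for every $\alpha\neq0$, so that $0$ is not an eigenvalue and $z(L_+)=0$, while the sign of $\mathcal F$ reverses with $\mathrm{sign}(\alpha)$; this supplies the extra negative eigenvalue in the attractive case and none in the repulsive case, giving two simple eigenvalues $\lambda_1<\lambda_2$ for $\alpha<0$ and a single $\lambda_1$ for $\alpha>0$. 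Summing the sectors then yields $n(L_+)=K+1$ for $\alpha<0$ and $n(L_+)=N-K$ for $\alpha>0$, and the interlacing $\lambda_*\in(\lambda_1,\lambda_2)$ and $\lambda_{**}\in(\lambda_1,0)$ would follow from the min-max comparison between the constrained Dirichlet sector and the symmetric block. I expect the genuinely hard step to be formulating the vertex-correction term in the graph Sturm theorem precisely and pinning down its sign, since that single ingredient is exactly what separates the attractive and repulsive cases.
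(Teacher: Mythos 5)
Your symmetry decomposition is essentially a repackaging of the paper's Lemma \ref{lem-eig}: the antisymmetric sectors of $S_K\times S_{N-K}$ with an induced Dirichlet condition at the vertex are exactly the paper's cases (a) and (b), with multiplicities $K-1$ and $N-K-1$, and your symmetric two-edge block is case (c). Your oscillation count in the Dirichlet sectors --- one negative eigenvalue per bump edge, none per tail edge, and no zero eigenvalue because $\phi_\omega'(\pm a_K)\neq 0$ --- is correct and reproduces the eigenvalue $\lambda_*$ (for $\alpha<0$) and $\lambda_{**}$ (for $\alpha>0$).

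The genuine gap sits exactly where you place it yourself: the negative-eigenvalue count for the symmetric block, which is the analytical core of part (ii), is invoked through an ``extended Sturm theorem with a vertex correction'' that you neither formulate nor prove. The sign of the flux defect $\mathcal{F}$ of the zero-energy solution cannot by itself determine the Morse index of the block: negative eigenvalues could a priori occur in pairs without altering any data at $\lambda=0$, so endpoint information gives at best a parity. What converts the sign of $\mathcal{F}$ into an exact count is monotonicity in $\lambda$ of a Weyl-type function, and this is precisely what the paper builds: the decaying solution $v(x;\lambda)$ of (\ref{eqofcomp}) with $v'/v\to-\infty$ as $\lambda\to-\infty$ (Lemma \ref{solhyp}), the strict monotone motion of its zeros (Lemma \ref{Sturm}), and the function $F(\lambda)$ in (\ref{F}), shown to be strictly increasing via $\partial_\lambda\bigl(v'/v\bigr)(c)=v(c)^{-2}\int_c^\infty v^2\,dx>0$, with a single pole at $\lambda_*$ and $F\to-\infty$ as $\lambda\to-\infty$. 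Only with this in hand does the sign of $F(0)-\alpha$ decide between two roots $\lambda_1<\lambda_*<\lambda_2$ ($\alpha<0$) and one root $\lambda_1<\lambda_*$ ($\alpha>0$), as in Lemma \ref{lastlemma}. Two further points you gloss over: first, $F(0)-\alpha=\mathcal{F}/f(0)$, so your claim that the sign of $\mathcal{F}$ reverses with $\operatorname{sign}(\alpha)$ is true only after normalizing $f(0)=g(0)>0$; with the unnormalized choice $f=\phi_\omega'(\cdot+a_K)$, $g=-\phi_\omega'(\cdot-a_K)$ one finds $\mathcal{F}>0$ for both signs of $\alpha$. Second, the interlacing $\lambda_1<\lambda_*<\lambda_2$ does not follow from ``min--max comparison'' between orthogonal symmetry sectors; what is actually needed (and what the paper verifies) is that the symmetric block has no eigenvalue at $\lambda_*$, because $v(|a_K|)\neq 0$ and $v'(-|a_K|)\neq 0$ whenever $v(-|a_K|)=0$, so case (c) fails there. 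In short, your plan is workable and its skeleton matches the paper's, but the lemma you defer as ``the genuinely hard step'' is the theorem itself.
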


By using the well-known instability result for the NLS equation, see Theorem 1.2 in \cite{GR}, 
the statement of Theorem \ref{thm1} is equivalent to following:
\begin{theorem}
\label{mainthm}
If $\alpha< 0$, then all standing waves $e^{i\omega t}\Phi_{\omega, K}$ with $K \geq 1$ bump(s) are spectrally (and orbitally) unstable. If $\alpha>0$, then all standing waves $e^{i\omega t}\Phi_{\omega, K}$ with $K\geq 0$ tail(s) are spectrally (and orbitally) unstable. 
\end{theorem}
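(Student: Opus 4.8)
The plan is to obtain Theorem \ref{mainthm} as a direct consequence of the index count in Theorem \ref{thm1} together with the abstract spectral-instability criterion of Theorem 1.2 in \cite{GR}. First I would verify that the structural hypotheses of that criterion are exactly those supplied by Theorem \ref{thm1}. Since $\Phi_{\omega,K}$ solves (\ref{stationary_NLS}), a direct substitution gives $L_-\Phi_{\omega,K}=0$, so $\Phi_{\omega,K}$ lies in the kernel of $L_-$; part (i) then asserts that this kernel is exactly one-dimensional with $L_-\ge 0$, i.e.\ $z(L_-)=1$ and $n(L_-)=0$, while part (ii) gives $z(L_+)=0$, so that $L_+$ is invertible and the criterion of \cite{GR} applies without any correction coming from a kernel of $L_+$. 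Under precisely these assumptions, \cite{GR} reduces the question of existence of an eigenvalue of the linearization $\left[\begin{smallmatrix}0 & L_- \\ -L_+ & 0\end{smallmatrix}\right]$ with positive real part to a comparison involving the single Morse index $n(L_+)$.

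Next I would invoke the criterion itself, whose mechanism I would recall for completeness. Seeking solutions of the form $\binom{U}{W}e^{\lambda t}$ reduces the linearized system to $L_-L_+U=\gamma U$ with $\gamma=-\lambda^2$; spectral instability is equivalent to the existence of an eigenvalue $\gamma$ of $L_-L_+$ that is real negative (or non-real). Because $L_-\ge 0$, the nonzero spectrum of $L_-L_+$ coincides with that of the self-adjoint operator $L_-^{1/2}L_+L_-^{1/2}$, and the number of its negative eigenvalues equals the negative index of the form $\langle L_+\,\cdot,\cdot\rangle$ restricted to $\overline{\mathrm{Ran}}\,L_-^{1/2}=\{\Phi_{\omega,K}\}^{\perp}$. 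Since this subspace has codimension one, that negative index is at least $n(L_+)-1$. Hence, whenever $n(L_+)\ge 2$, there is some $\gamma<0$, producing a real eigenvalue $\lambda=\sqrt{-\gamma}>0$ and thus spectral instability. As the Cauchy problem for (\ref{eq1}) is locally well posed, a real positive eigenvalue upgrades to orbital instability by the standard argument.

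It then remains to check that $n(L_+)\ge 2$ in each case claimed, reading the values off Theorem \ref{thm1}(ii). For $\alpha<0$ and $K\ge 1$ one has $n(L_+)=K+1\ge 2$, so every excited state is unstable; the symmetric state $K=0$ yields $n(L_+)=1$ and is (correctly) excluded, consistent with its known orbital stability and showing the dichotomy is sharp. For $\alpha>0$ and any admissible $K\in\{0,\dots,[\tfrac{N-1}{2}]\}$ one has $n(L_+)=N-K$; since $K\le[\tfrac{N-1}{2}]$ forces $N-K\ge N-[\tfrac{N-1}{2}]=[\tfrac{N}{2}]+1\ge 2$ for every $N\ge 2$, all standing waves are unstable. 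Substituting these values into the criterion delivers Theorem \ref{mainthm}.

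I expect the genuine difficulty to lie upstream, not in this corollary: the instability step is essentially an index comparison once the Morse numbers are in hand, so the main obstacle is the exact evaluation of $n(L_+)$ for the nonsymmetric profiles, which is precisely where the extension of Sturm oscillation theory to Schr\"odinger operators on $\Gamma$ carries the load. Within the present deduction the only point requiring care is confirming that $z(L_+)=0$ and $z(L_-)=1$, so that no degenerate correction term enters the criterion of \cite{GR}, in particular at the threshold $\omega=\alpha^2/(N-2K)^2$ where the stationary state $\Phi_{\omega,K}$ first appears.
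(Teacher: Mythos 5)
Your proposal is correct and follows essentially the same route as the paper, which obtains Theorem \ref{mainthm} directly from the index count of Theorem \ref{thm1} combined with the instability criterion of Theorem 1.2 in \cite{GR} (the paper states this as a one-line equivalence, while you additionally unpack the mechanism of that criterion and verify the arithmetic $n(L_+)\ge 2$ in each case). Your closing observation is also accurate: the paper's real work lies in proving Theorem \ref{thm1} via the Sturm theory on $\Gamma$, not in this deduction.
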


For $\alpha<0$ and $K=0$, our count of Morse and degeneracy indices coincides with the count in Proposition 6.1 in \cite{ACDD} and implies the stability result given by Theorem 1 in \cite{ACDD2}:
\begin{theorem}
Let $p \in (0,2]$ and $\alpha<0$. Then, given $\omega>\frac{\alpha^2}{N^2}$, the standing wave $e^{i\omega t}\Phi_{\omega, 0}$ is orbitally stable.
\end{theorem}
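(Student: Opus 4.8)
The plan is to combine the spectral count from Theorem \ref{thm1} with the classical orbital stability theory of Grillakis, Shatah and Strauss. For $\alpha<0$ and $K=0$, Theorem \ref{thm1} gives $n(L_+)=1$, $z(L_+)=0$, $n(L_-)=0$ and $z(L_-)=1$. The kernel of $L_-$ is spanned by $\Phi_{\omega,0}$ itself and reflects the gauge invariance of (\ref{eq1}); this is the only continuous symmetry surviving the vertex condition, since translation invariance on $\Gamma$ is broken by (\ref{H2}). First I would record that this configuration---a single negative direction for $L_+$, no zero modes of $L_+$, and $L_-$ nonnegative with one-dimensional kernel generated by the gauge orbit---is exactly the setting in which the GSS criterion reduces orbital stability to a single scalar slope condition.

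Next I would introduce the action $d(\omega)=E(\Phi_{\omega,0})+\omega Q(\Phi_{\omega,0})$, where $Q(\Psi)=\tfrac12\|\Psi\|_{L^2(\Gamma)}^2$ is the conserved mass, so that $d'(\omega)=Q(\Phi_{\omega,0})$ and orbital stability is equivalent to the Vakhitov--Kolokolov condition $d''(\omega)=\tfrac{d}{d\omega}Q(\Phi_{\omega,0})>0$. Using the explicit profile from Lemma \ref{stationary}, I would compute the mass in closed form: writing $b=-a_0>0$ and substituting $u=p\sqrt{\omega}\,y$,
\begin{equation}
\|\Phi_{\omega,0}\|_{L^2(\Gamma)}^2 = N\int_b^\infty \phi_\omega(y)^2\,dy = \frac{N\,\omega^{\frac1p-\frac12}}{p}\int_{c(\omega)}^\infty \sech^{2/p}(u)\,du,
\end{equation}
where $c(\omega)=\arctanh\!\left(\tfrac{|\alpha|}{N\sqrt{\omega}}\right)$ decreases monotonically from $+\infty$ to $0$ as $\omega$ ranges over $\left(\tfrac{\alpha^2}{N^2},\infty\right)$.

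The sign of $\tfrac{d}{d\omega}\|\Phi_{\omega,0}\|^2$ then comes from two competing contributions: the prefactor $\omega^{1/p-1/2}$ and the moving lower limit $c(\omega)$. Differentiating and using $c'(\omega)<0$ (so the integral factor is increasing in $\omega$), I would check that both contributions are nonnegative for $p\le 2$ and that at least one is strictly positive. The main obstacle is precisely the critical exponent $p=2$: there the prefactor $\omega^{1/p-1/2}=\omega^0$ is constant, so on the whole line the soliton mass is $\omega$-independent and the Vakhitov--Kolokolov quantity vanishes---this is the classical degeneracy responsible for critical instability on $\mathbb{R}$. On the star graph the moving lower limit rescues stability: since $c$ is strictly decreasing, the integral $\int_{c(\omega)}^\infty \sech^{2/p}(u)\,du$ is strictly increasing, which forces $\tfrac{d}{d\omega}\|\Phi_{\omega,0}\|^2>0$ even at $p=2$. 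Isolating and quantifying this boundary effect---showing that the attractive vertex shift strictly tips the slope positive across the entire subcritical range and at the critical power---is the delicate part of the argument. Once it is established, the GSS theorem yields orbital stability and recovers Theorem 1 of \cite{ACDD2}.
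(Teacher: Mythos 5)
Your proposal is correct, but it is more self-contained than what the paper actually does, so a comparison is in order. The paper never proves this theorem internally: its own contribution is the index count ($n(L_+)=1$, $z(L_+)=0$ from Remark \ref{lastremark}, and $n(L_-)=0$, $z(L_-)=1$ from Theorem \ref{thm1}(i)), after which the stability statement is obtained essentially by citation --- the count ``coincides with Proposition 6.1 in \cite{ACDD} and implies the stability result given by Theorem 1 in \cite{ACDD2}'', and in \cite{ACDD2} the underlying argument is variational (the standing wave is exhibited as a local minimizer of the energy at fixed mass), not a slope computation. You instead close the Grillakis--Shatah--Strauss argument explicitly: with that spectral configuration, stability reduces to $d''(\omega)=\frac{d}{d\omega}Q(\Phi_{\omega,0})>0$, and your closed-form mass
\begin{equation*}
\|\Phi_{\omega,0}\|_{L^2(\Gamma)}^2=\frac{N\,\omega^{\frac1p-\frac12}}{p}\int_{c(\omega)}^{\infty}\sech^{2/p}(u)\,\diff u,
\qquad c(\omega)=\arctanh\!\left(\frac{|\alpha|}{N\sqrt{\omega}}\right),
\end{equation*}
is right (the substitution $u=p\sqrt{\omega}\,y$ and the limits $c(\omega)\to+\infty$ as $\omega\downarrow\alpha^2/N^2$, $c(\omega)\to 0$ as $\omega\to\infty$ all check out), as is the sign analysis: the $\omega$-derivative splits into the term $\bigl(\tfrac1p-\tfrac12\bigr)\omega^{\frac1p-\frac32}\int_{c(\omega)}^{\infty}\sech^{2/p}(u)\,\diff u$, which is nonnegative exactly when $p\le 2$, plus the boundary term $-\omega^{\frac1p-\frac12}\,c'(\omega)\sech^{2/p}(c(\omega))$, which is strictly positive since $c'(\omega)<0$; hence $d''(\omega)>0$ on all of $\omega\in\bigl(\tfrac{\alpha^2}{N^2},\infty\bigr)$ for every $p\in(0,2]$. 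Your reading of the critical case is also the correct mechanism: at $p=2$ the prefactor term vanishes (on the line the soliton mass is $\omega$-independent and VK degenerates), and it is precisely the strictly decreasing cutoff $c(\omega)$ created by the attractive vertex that keeps the slope strictly positive. What each approach buys: the paper's citation-based treatment is short and treats the theorem as a consistency check on its index count; your route gives a uniform, explicit proof across the whole range $p\in(0,2]$ including the critical power, provided you also record that the GSS hypotheses (local well-posedness on $\Gamma$ and $C^1$ dependence of $\omega\mapsto\Phi_{\omega,0}$) hold, which they do by \cite{ACDD} and the explicit formulas of Lemma \ref{stationary}. One small point: the step you flag as ``delicate'' is in fact immediate once the derivative is written out --- nothing further needs to be quantified.
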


\begin{remark}
\label{PGremark}
Partial results of Theorem \ref{mainthm} have been obtained in the recent work \cite{PG2} by using the extension theory of symmetric operators. 
In particular, for the case $\alpha<0$ with $p>2$, authors showed the existence of some $\omega_K^*$ such that the instability result holds for all 
$\omega \in \Big( \frac{\alpha^2}{(N-2K)^2}, \omega_K^* \Big)$. It has been noted by authors that no results were obtained for 
$\omega > \omega_K^*$. Theorem \ref{mainthm} extends these results to all $\omega \in \Big( \frac{\alpha^2}{(N-2K)^2}, \infty \Big)$. 
\end{remark}

\section{Proof of Theorem \ref{thm1}}

To prove Theorem \ref{thm1}, we compute Morse and degeneracy indices on graph $\Gamma$ by using a method developed in \cite{KP2}. Consider the following Schr\"{o}dinger equation on the real line, $x \in \mathbb{R}$,
\begin{equation} 
\label{eqofcomp}
- v''(x) + \omega v(x) - (2p+1)(p+1) \omega 
\sech^2(p\sqrt{\omega}x) v(x) = \lambda v(x),  
\quad \lambda < \omega.
\end{equation}
We are interested in the exponentially decaying solutions to (\ref{eqofcomp}) as $x \to +\infty$. The following three lemmas represent Sturm theory for the Schr\"{o}dinger equation on $\mathbb{R}$, see also \cite{KP2}.

\begin{lemma}
\label{solhyp}
For every $\lambda<\omega$, there exists a unique solution $v \in C^1(\mathbb{R})$ to equation (\ref{eqofcomp})
such that
\begin{equation}
\label{u-limit}
\lim_{x \to +\infty} v(x) e^{\sqrt{\omega-\lambda} x} = 1.
\end{equation}
The other linearly independent solution to equation (\ref{eqofcomp}) diverges as $x \to +\infty$.
Moreover, for any fixed $x_0 \in \mathbb{R}$, $v(x_0)$ is a $C^1$ function of $\lambda$ for $\lambda < \omega$ such that $\frac{v'(x_0)}{v(x_0)} \to -\infty$ as $\lambda \to -\infty$.
\end{lemma}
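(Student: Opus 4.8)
The plan is to recognize Lemma \ref{solhyp} as the classical construction of the Jost (exponentially decaying) solution for a Schr\"odinger operator with a short-range potential, here of P\"oschl--Teller type. Write $\kappa = \sqrt{\omega-\lambda}>0$ and $V(x) = (2p+1)(p+1)\omega\,\sech^2(p\sqrt{\omega}x)$, and note that $V(x)$ decays like $e^{-2p\sqrt{\omega}|x|}$, so in particular $V \in L^1(\mathbb{R})$. Rewriting (\ref{eqofcomp}) as $v'' = (\kappa^2 - V)v$ and treating $-Vv$ as a forcing term for the constant-coefficient operator $\frac{d^2}{dx^2}-\kappa^2$, the sought solution is characterized by the Volterra integral equation
\[
v(x) = e^{-\kappa x} - \frac{1}{\kappa}\int_x^\infty \sinh\big(\kappa(y-x)\big)\,V(y)\,v(y)\,dy,
\]
whose kernel is assembled from the homogeneous solutions $e^{\pm\kappa x}$; a direct differentiation shows that a bounded $C^1$ solution of this equation solves (\ref{eqofcomp}) and has the asymptotics (\ref{u-limit}), and conversely.

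First I would solve this integral equation by successive approximation. The substitution $v(x)=e^{-\kappa x}w(x)$ turns it into $w(x) = 1 - \frac{1}{\kappa}\int_x^\infty \frac12\big(1-e^{-2\kappa(y-x)}\big)V(y)\,w(y)\,dy$, whose kernel is bounded in absolute value by $V(y)/(2\kappa)$ for $y\ge x$. Since $\int_{\mathbb{R}}V<\infty$, the associated integral operator is a contraction on $L^\infty((T,\infty))$ for $T$ sufficiently large, so the Neumann series converges uniformly and yields a unique bounded continuous $w$ with $w(x)\to1$, equivalently a unique $v$ satisfying (\ref{u-limit}). Each iterate is analytic in $\kappa$, and since the series and its $\lambda$-derivative converge locally uniformly for $\lambda<\omega$, the values $v(x_0)$ and $v'(x_0)$ are $C^1$ (indeed real-analytic) functions of $\lambda$, as claimed.

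For the linearly independent solution I would use reduction of order: where $v\neq0$ a second solution is $\tilde v(x)=v(x)\int^x v(y)^{-2}\,dy$. Since $v(x)\sim e^{-\kappa x}$ as $x\to+\infty$, the integrand grows like $e^{2\kappa y}$ and $\tilde v(x)\sim (2\kappa)^{-1}e^{\kappa x}\to\infty$; any solution not proportional to $v$ contains a nonzero multiple of $\tilde v$ and therefore diverges.

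The remaining and most delicate point is the limit $v'(x_0)/v(x_0)\to-\infty$ as $\lambda\to-\infty$, i.e. $\kappa\to\infty$, for which I would extract uniform-in-$\kappa$ bounds from the integral equation. The kernel estimate gives $w(x)=1+O(\kappa^{-1})$ uniformly in $x$, hence $e^{\kappa x_0}v(x_0)\to1$. Differentiating the integral equation gives $v'(x)=-\kappa e^{-\kappa x}+\int_x^\infty\cosh\big(\kappa(y-x)\big)V(y)v(y)\,dy$; multiplying by $e^{\kappa x_0}$ and using $e^{\kappa x_0}\cosh(\kappa(y-x_0))e^{-\kappa y}=\frac12\big(1+e^{-2\kappa(y-x_0)}\big)\le1$ bounds the integral by $\|V\|_{L^1}\|w\|_\infty=O(1)$ uniformly in $\kappa$. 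Therefore $e^{\kappa x_0}v'(x_0)=-\kappa+O(1)$, and dividing the two asymptotics yields $v'(x_0)/v(x_0)=-\kappa+O(1)\to-\infty$. The main obstacle is organizing these weighted bounds so that the $O(1)$ contribution of the potential stays uniform in $\kappa$ while the leading term $-\kappa$ dominates; the exponential decay of the P\"oschl--Teller potential is exactly what keeps $\|V\|_{L^1}$ finite and makes the estimate go through.
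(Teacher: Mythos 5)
Your proposal is correct and takes essentially the same route as the paper's proof: the same Jost/Volterra integral equation, the same substitution $w(x) = e^{\sqrt{\omega-\lambda}\,x}v(x)$ solved by Neumann series with uniqueness, the same appeal to analytic dependence of the integral equation on $\lambda$, and the same decomposition $v'(x_0)/v(x_0) = -\sqrt{\omega-\lambda} + O(1)$ obtained from uniform-in-$\lambda$ bounds on $w$ and its derivative. The only cosmetic differences are that you prove divergence of the second solution by reduction of order where the paper uses constancy of the Wronskian, and that your sign in front of the integral term is in fact the correct one (the paper's displayed Volterra equation carries a harmless sign typo that does not affect its estimates).
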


\begin{proof}
The proof is based on the reformulation of of the boundary-value problem (\ref{eqofcomp})--(\ref{u-limit})
as Volterra's integral equation
\begin{equation}
\label{Volterra}
v(x) = e^{-\sqrt{\omega-\lambda}x} - \frac{(2p+1)(p+1)}{\sqrt{\omega-\lambda}}
\int_x^{\infty} \sinh(\sqrt{\omega-\lambda}(x-y)) \sech^2(py) v(y) \diff y.
\end{equation}
Set $w(x;\lambda) = v(x) e^{\sqrt{\omega-\lambda}x}$ to get the following Volterra's integral equation with a bounded kernel:
\begin{equation}
\label{VolterraMod}
w(x;\lambda) = 1 + \frac{(2p+1)(p+1)}{2\sqrt{\omega-\lambda}} \int_x^{\infty}  ( 1-e^{-2\sqrt{\omega-\lambda}(y-x)}) \sech^2(py) w(y;\lambda) \diff y.
\end{equation}
By standard Neumann series and the ODE theory, the existence and uniqueness of a solution $w(\cdot;\lambda) \in C^1(\mathbb{R})$
of the integral equation (\ref{VolterraMod}) with $\lim_{x \to \infty} w(x;\lambda) = 1$ is obtained for every $\lambda < \omega$. Such construction provides with the solution $v\in C^1(\mathbb{R})$ to the differential equation (\ref{eqofcomp}) exponentially decaying as 
$x \to +\infty$. For any fixed $x_0$, $v(x_0)$ is (at least) $C^1$ function of $\lambda < \omega$ since the Volterra's integral equation (\ref{Volterra}) depends analytically on $\lambda$ for $\lambda < \omega$. 
The other linearly independent solution to the differential equation (\ref{eqofcomp}) diverges as $x\to +\infty$ due to the $x$-independent and nonzero Wronskian determinant between two solutions. 

It remains to prove that $\frac{v'(x_0)}{v(x_0)} \to -\infty$ as $\lambda \to -\infty$ for any fixed $x_0 \in\mathbb{R}$. Using the setting $w(x;\lambda) = v(x) e^{\sqrt{\omega-\lambda}x}$, we get
\begin{equation}
\label{Weyl_function}
\frac{v'(x_0)}{v(x_0)} = -\sqrt{\omega-\lambda} + 
\frac{w'(x_0;\lambda)}{w(x_0;\lambda)}. 
\end{equation}
Since $w(\cdot, \lambda) \in C^1(\mathbb{R})$ and $\lim_{x \to \infty} w(x;\lambda) = 1$, we get $w(\cdot, \lambda) \in L^\infty[x_0, \infty)$. The construction (\ref{VolterraMod}) yields that 
$\|w\|_{L^\infty[x_0, \infty)} \leq 2$ for large enough negative $\lambda$, and so, as $\lambda \to -\infty$, (\ref{VolterraMod}) implies
$$
|w(x_0;\lambda)-1| \leq \frac{C_p}{\sqrt{\omega-\lambda}} \|w\|_{L^\infty[x_0, \infty)} \leq \frac{2C_p}{\sqrt{\omega-\lambda}} \to 0,
$$
where $C_p$ is constant which depends on $p$ only. Therefore, 
\begin{equation}
\label{bound1}
w(x_0; \lambda) \to 1 \quad {\rm as} \quad \lambda \to -\infty.
\end{equation}
Differentiating the equation (\ref{VolterraMod}) in $x$, we get 
$$
w'(x;\lambda) = -(2p+1)(p+1)\int_x^{\infty}  e^{-2\sqrt{\omega-\lambda}(y-x)} \sech^2(py) w(y;\lambda) \diff y.
$$
Since the integrand in the latter expression is bounded for 
$\lambda<\omega$, for $\lambda \to -\infty$ we get
\begin{equation}
\label{bound2}
|w'(x_0; \lambda)| \leq \hat{C}_p \|w\|_{L^\infty[x_0, \infty)} \leq 2\hat{C}_p,
\end{equation}

where $\hat{C}_p$ is constant which depends on $p$ only.

Finally, by using the bounds in (\ref{bound1}) and (\ref{bound2}), the expression (\ref{Weyl_function}) implies that $\frac{v'(x_0)}{v(x_0)} \to -\infty$ as $\lambda \to -\infty$.
\end{proof}
 
\begin{lemma}
\label{Sturm}
Let $v = v(x;\lambda)$ be the solution defined by Lemma \ref{solhyp}. Assume that $v(x;\lambda_1)$
has a simple zero at $x = x_1 \in \mathbb{R}$ for some $\lambda_1 \in (-\infty,\omega)$.
Then, there exists a unique $C^1$ function
$\lambda \mapsto x_0(\lambda)$ for $\lambda$ near $\lambda_1$ such that $v(x;\lambda)$ has a simple
zero at $x = x_0(\lambda)$ with $x_0(\lambda_1) = x_1$ and $x_0'(\lambda_1) > 0$.
\end{lemma}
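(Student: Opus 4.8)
The plan is to combine the implicit function theorem, which produces the $C^1$ curve of zeros, with a Wronskian-type identity in the spectral parameter, which pins down the sign of its derivative.

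First I would establish existence and uniqueness of $x_0(\lambda)$. By Lemma \ref{solhyp}, for each fixed $x$ the value $v(x;\lambda)$ is $C^1$ in $\lambda$, while as a solution of the second-order equation (\ref{eqofcomp}) it is $C^2$ in $x$; the analytic dependence of the Volterra equation (\ref{Volterra}) on $\lambda$ upgrades this to joint smoothness of $v$ in $(x,\lambda)$. Since the zero at $x_1$ is simple, $\partial_x v(x_1;\lambda_1) = v'(x_1;\lambda_1) \neq 0$, so applying the implicit function theorem to $v(x;\lambda)=0$ at $(x_1,\lambda_1)$ yields a unique $C^1$ function $x_0(\lambda)$ near $\lambda_1$ with $x_0(\lambda_1)=x_1$ and $v(x_0(\lambda);\lambda)=0$. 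The zero remains simple for $\lambda$ close to $\lambda_1$ by continuity of $v'$. Differentiating $v(x_0(\lambda);\lambda)=0$ in $\lambda$ then gives
\[
x_0'(\lambda_1) = -\frac{\partial_\lambda v(x_1;\lambda_1)}{v'(x_1;\lambda_1)},
\]
so the entire problem reduces to determining the sign of the right-hand side.

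To compute this sign I would write $\dot v := \partial_\lambda v$ and use the ODE in the form $v'' = (\omega - \lambda - Q)\,v$, where $Q(x) = (2p+1)(p+1)\omega\,\sech^2(p\sqrt{\omega}\,x)$, together with its $\lambda$-derivative $\dot v'' = (\omega-\lambda-Q)\dot v - v$. A direct computation then shows
\[
\frac{\diff}{\diff x}\left(v'\dot v - v\,\dot v'\right) = v''\dot v - v\,\dot v'' = v^2 .
\]
Integrating from $x_1$ to $+\infty$, the boundary term at $+\infty$ vanishes because $v$, $v'$, $\dot v$, and $\dot v'$ all decay exponentially: the limit (\ref{u-limit}) gives $v\sim e^{-\sqrt{\omega-\lambda}\,x}$, and differentiating in $\lambda$ introduces at most polynomial prefactors that do not destroy the decay. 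Using $v(x_1;\lambda_1)=0$, the boundary term at $x_1$ is $-v'(x_1)\dot v(x_1)$, whence
\[
v'(x_1;\lambda_1)\,\dot v(x_1;\lambda_1) = -\int_{x_1}^{\infty} v(x;\lambda_1)^2 \diff x < 0 .
\]
Therefore $x_0'(\lambda_1) = -\dot v(x_1)/v'(x_1) = \big(\int_{x_1}^{\infty} v^2 \diff x\big)\big/ v'(x_1)^2 > 0$, which is the desired conclusion.

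The main obstacle is this sign computation, and within it the verification that the boundary contribution at $+\infty$ genuinely vanishes; this requires the decay rates of $\dot v$ and $\dot v'$, which I would extract from the modified Volterra equation (\ref{VolterraMod}) for $w = v\, e^{\sqrt{\omega-\lambda}\,x}$ by differentiating in $\lambda$ and checking that $w$ and its $\lambda$-derivative stay bounded on $[x_1,\infty)$, so that $\dot v$ and $\dot v'$ inherit the exponential decay of $v$ and $v'$. The implicit function theorem step is routine once the joint $C^1$ regularity of $v$ is in hand, and the identity above is precisely the mechanism that forces the nodal positions to move to the right as $\lambda$ increases, which is the Sturm-type monotonicity driving the later index counts.
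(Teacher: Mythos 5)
Your proposal is correct and follows essentially the same route as the paper: the implicit function theorem produces the curve of zeros, and your Wronskian identity $\frac{\diff}{\diff x}(v'\dot v - v\dot v') = v^2$ integrated over $[x_1,\infty)$ is exactly the paper's computation (there phrased as multiplying the $\lambda$-differentiated equation by $v$ and integrating by parts), yielding the same formula $(\partial_x v(x_1;\lambda_1))^2\, x_0'(\lambda_1) = \int_{x_1}^{\infty} v(x;\lambda_1)^2\,\diff x > 0$.
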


\begin{proof}
By the previous lemma, $v$ is a $C^1$ function of $x$ and $\lambda$ for every $x \in \mathbb{R}$
and $\lambda \in (-\infty,\omega)$. Since $x_1$ is a simple zero of $v(x;\lambda_1)$, we get
$\partial_x v(x_1;\lambda_1) \neq 0$, and this allows us to use the implicit function theorem
to get a unique $C^1$ function $\lambda \mapsto x_0(\lambda)$ for $\lambda$ near $\lambda_1$
such that $v(x;\lambda)$ has a simple
zero at $x = x_0(\lambda)$ with $x_0(\lambda_1) = x_1$.
It remains to show that $x_0'(\lambda_1) > 0$.

Notice that $v(x_0(\lambda);\lambda) = 0$ implies 
\begin{equation}
\label{root-der-eq}
\partial_\lambda v(x_0(\lambda); \lambda)\Big|_{\lambda=\lambda_1} = 
\partial_x v(x_1;\lambda_1) x_0'(\lambda_1) + \partial_{\lambda} v(x_1;\lambda_1) = 0.
\end{equation}
Let us denote $\tilde{v}(x) = \partial_{\lambda} v(x;\lambda_1)$. Then, differentiating
equation (\ref{eqofcomp}) in $\lambda$, we have the
differential equation for $\tilde{v}$ on $\mathbb{R}$:
\begin{equation} \label{der-v-eq}
- \tilde{v}''(x) + \tilde{v}(x) - (2p+1)(p+1) \sech^2(px) \tilde{v}(x) =
\lambda_1 \tilde{v}(x) + v(x;\lambda_1), \quad \lambda < \omega.
\end{equation}
By using the Volterra's integral equation as in Lemma \ref{solhyp} above, we have that the function $\tilde{v}$ is $C^1$ in $x$ and decays to zero as $x \to \infty$. We multiply (\ref{der-v-eq}) by $v(x;\lambda_1)$,
integrate by parts on $[x_1,\infty)$, and use equation (\ref{eqofcomp}) to get
\begin{equation} \label{der-vv-eq}
- \partial_x v(x_1;\lambda_1) \tilde{v}(x_1) = \int_{x_1}^{\infty} v(x;\lambda_1)^2 dx.
\end{equation}
Finally, by using the expressions (\ref{root-der-eq}) and (\ref{der-vv-eq}) we have
\begin{equation}
\label{zeroshift}
(\partial_x v(x_1;\lambda_1) )^2 x_0'(\lambda_1) = \int_{x_1}^{\infty} v(x;\lambda_1)^2 dx > 0
\end{equation}
which, together with $\partial_x v(x_1;\lambda_1) \neq 0$, implies that $x_0'(\lambda_1) > 0$.
\end{proof}

The following result represents the 
\begin{lemma}
\label{lem-Sturm}
Let $v$ be the solution defined in Lemma \ref{solhyp}. If $v(0) = 0$ (resp. $v'(0) = 0$) for some $\lambda_0 < \omega$,
then the eigenfunction $v$ to the Schr\"{o}dinger equation (\ref{eqofcomp}) is
an odd (resp. even) function on $\mathbb{R}$, hence $\lambda_0$ is an eigenvalue of
the associated Schr\"{o}dinger operator defined in $L^2(\mathbb{R})$.
There exists exactly one $\lambda_0 < 0$ corresponding to $v'(0) = 0$ and a simple eigenvalue $\lambda_0 = 0$
corresponding to $v(0) = 0$, all other such points $\lambda_0$ are located in $(0,\omega)$ and are bounded away from zero.
\end{lemma}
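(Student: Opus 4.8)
The plan is to prove the two assertions separately: first the parity of the eigenfunctions, then the precise location and count of the eigenvalues, anchored at $\lambda_0 = 0$.

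For the parity statement I would exploit that the potential $\sech^2(p\sqrt{\omega}x)$ in (\ref{eqofcomp}) is even in $x$, so that both $x \mapsto v(-x;\lambda)$ and $x \mapsto -v(-x;\lambda)$ again solve (\ref{eqofcomp}). If $v(0;\lambda_0)=0$, then $-v(-x;\lambda_0)$ and $v(x;\lambda_0)$ share the Cauchy data $(0,\,v'(0;\lambda_0))$ at $x=0$, so by uniqueness of solutions of the ODE they coincide and $v$ is odd; if instead $v'(0;\lambda_0)=0$, the same argument applied to $v(-x;\lambda_0)$ shows $v$ is even. In either case the normalization $v(x)e^{\sqrt{\omega-\lambda_0}x}\to 1$ at $+\infty$ from Lemma \ref{solhyp} together with the parity yields exponential decay as $x\to-\infty$, so $v\in L^2(\mathbb{R})$ is a genuine eigenfunction. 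Moreover every such eigenvalue is automatically simple, since by Lemma \ref{solhyp} the space of solutions decaying at $+\infty$ is one-dimensional.

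The anchor of the count is $\lambda_0=0$. Differentiating in $x$ the scalar stationary equation $-\phi_\omega''+\omega\phi_\omega-(p+1)\phi_\omega^{2p+1}=0$ satisfied by $\phi_\omega$ shows that $\phi_\omega'$ solves (\ref{eqofcomp}) with $\lambda=0$ and decays at $+\infty$, so by uniqueness $v(\cdot;0)$ is a nonzero multiple $c\,\phi_\omega'$. Since $\phi_\omega$ is even with a single nondegenerate maximum at $x=0$, its derivative $\phi_\omega'$ is odd and has exactly one zero on $\mathbb{R}$, a simple zero at $x=0$; hence $v(0;0)=0$, the eigenvalue $\lambda_0=0$ is simple, and its eigenfunction is odd. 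I would then locate the negative eigenvalue by tracking $v'(0;\lambda)$: the normalization forces $c<0$, so $v'(0;0)=c\,\phi_\omega''(0)>0$, while Lemma \ref{solhyp} gives $v'(0;\lambda)/v(0;\lambda)\to-\infty$ with $v(0;\lambda)\to 1$, whence $v'(0;\lambda)\to-\infty$ as $\lambda\to-\infty$. The intermediate value theorem then produces $\lambda_g\in(-\infty,0)$ with $v'(0;\lambda_g)=0$, i.e. an even eigenfunction with $\lambda_g<0$. A parity argument pins this down: an even eigenfunction can vanish only in symmetric pairs $\pm a$ (a zero at the origin together with $v'(0)=0$ would force $v\equiv 0$), so it can never have an odd number of zeros, which excludes any even state with a single node and leaves $\lambda_g$ as a nodeless eigenvalue.

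To place everything else in $(0,\omega)$ I would use that, by Lemma \ref{Sturm}, the zero of $v(\cdot;\lambda)$ sitting at $x=0$ when $\lambda=0$ moves strictly to the right as $\lambda$ increases, so for every $\lambda<0$ it lies at some $x_0(\lambda)<0$ and $v(0;\lambda)\neq 0$; this rules out any odd eigenfunction below $0$ and identifies $\lambda_0=0$ as the lowest value with $v(0)=0$. Combined with the previous paragraph, the only eigenvalue below $0$ is $\lambda_g$ and the only one at $0$ is $\phi_\omega'$, so all remaining eigenvalues are positive; since $\sigma_c=[\omega,\infty)$ and the discrete spectrum below $\omega$ is finite and isolated, these lie in $(0,\omega)$ and are bounded away from $0$. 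The hard part is the \emph{global} zero--count underlying this step, namely ruling out additional negative eigenvalues and additional nodes of the ground state. I would settle it by the full-line oscillation principle that the number of zeros of the decaying solution $v(\cdot;\lambda)$ equals the number of eigenvalues strictly below $\lambda$, which follows by combining the monotone motion of zeros in Lemma \ref{Sturm} with the asymptotics $v>0$ on $\mathbb{R}$ as $\lambda\to-\infty$ from Lemma \ref{solhyp}; equivalently one may read the count off the interlacing $\lambda_1^N<\lambda_1^D<\lambda_2^N<\cdots$ of the Neumann and Dirichlet eigenvalues on $[0,\infty)$, in which $\lambda_1^D=0$ has already been identified through $\phi_\omega'$.
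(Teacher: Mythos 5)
Your proof is correct and follows essentially the same route as the paper: parity of eigenfunctions via the even potential plus uniqueness of the decaying solution, and the eigenvalue count via Sturm oscillation theory anchored at the explicit identification $v(\cdot\,;0)=c\,\phi_\omega'$, whose single zero makes $\lambda_0=0$ the second eigenvalue. The paper compresses the count into a one-line appeal to Sturm's theorem, whereas you additionally construct the negative (even) eigenvalue by the intermediate value theorem applied to $v'(0;\lambda)$ and justify the exclusivity through the oscillation/interlacing principle --- a more detailed rendering of the same argument rather than a different one.
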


\begin{proof}
The uniqueness of the solution $v$ in Lemma \ref{solhyp} and the reversibility of the Schr\"{o}dinger equation (\ref{eqofcomp}) with
respect to the transformation $x \mapsto -x$ yields the existence of even and odd eigenfunctions on $\mathbb{R}$. The count of eigenvalues follows by Sturm's Theorem since
the odd eigenfunction for the eigenvalue $\lambda_0 = 0$,
$$
\phi'(x) = - \sech^{\frac{1}{p}}(px) \tanh(px)
$$
has one zero on the infinite line. Hence, $\lambda_0 = 0$ is the second eigenvalue of
the Schr\"{o}dinger equation (\ref{eqofcomp}) with exactly one simple negative eigenvalue $\lambda_0 < 0$
that corresponds to an even eigenfunction.
\end{proof}

Further in the paper, we use $\lambda_0$ to denote the eigenvalue from Lemma \ref{lem-Sturm} with positive even eigenfunction. The Figure 3 illustrates Lemmas \ref{solhyp}-\ref{lem-Sturm} and shows profiles of the solution $v$ satisfying (\ref{eqofcomp}) and (\ref{u-limit}) for values of $\lambda$ in the interval $(-\infty, 0)$.

\begin{figure}[htb]
\begin{center}
\includegraphics[height=2cm,width=6cm]{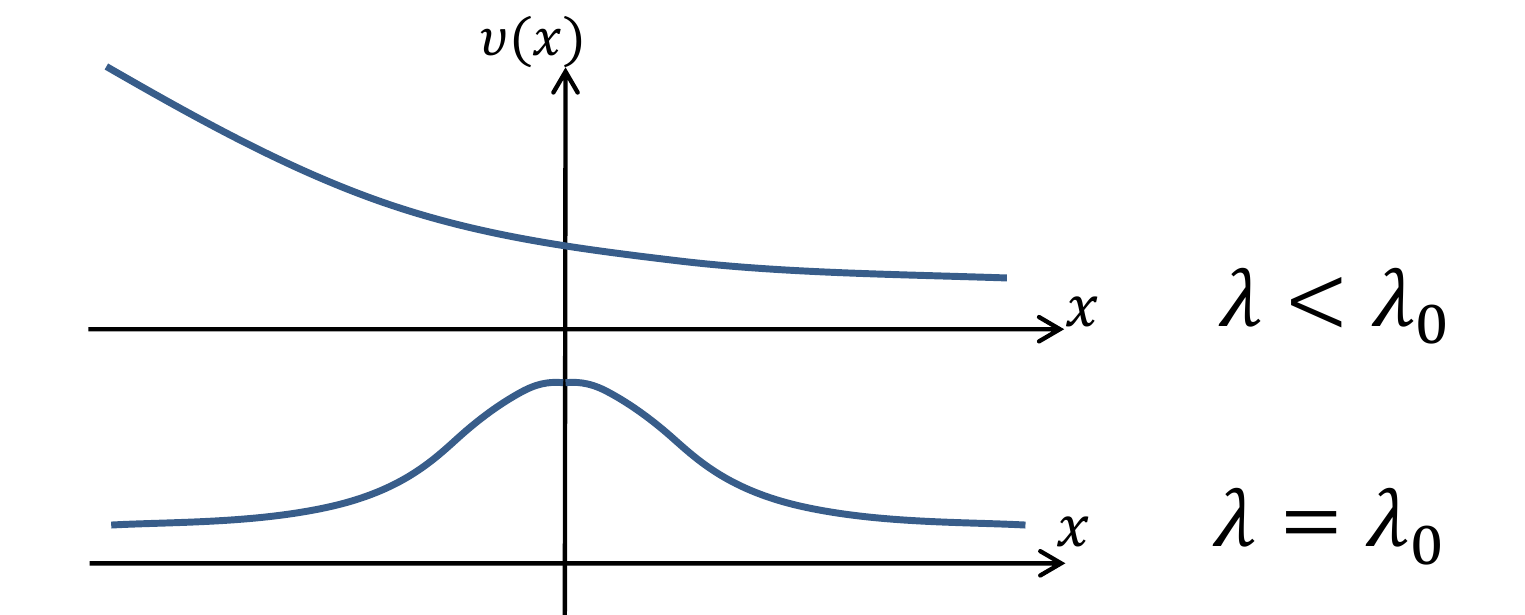}
\includegraphics[height=2cm,width=6cm]{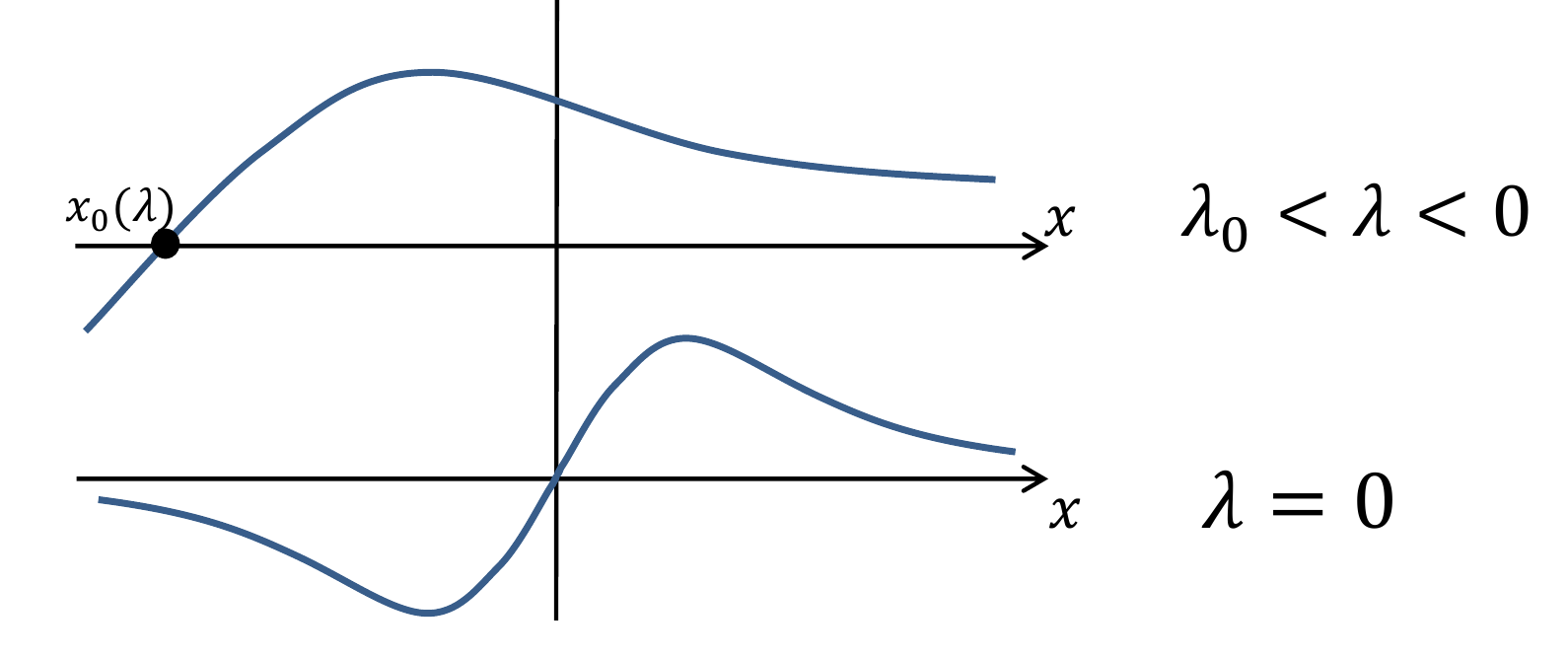}
\end{center}
\caption{The solution $v$ of the differential equation (\ref{eqofcomp}) with 
the limit (\ref{u-limit}) versus $x$ for several values of $\lambda$}
\label{fig-3}
\end{figure}

Next two lemmas provide us with the useful tools to compute the Morse and degeneracy indices of the operator $L_+$ at the state $\Phi_{\omega,K}$. 
\begin{lemma}
\label{lem-eig}
Let $\alpha \in \mathbb{R}\backslash \{0\}$, $K \geq 0$, $v$ be the solution to (\ref{eqofcomp}) given by Lemma \ref{solhyp}, and $L_+$ be the Hessian operator (\ref{Lplus}) associated with $\Phi_{\omega,K}$. 
Then,
$\lambda \in (-\infty, \omega)$ is an eigenvalue of $\sigma_p(L_+)$ if and only if at least one of the following conditions holds: \\
(a) $v(a_K) = 0$ with $K \geq 1$, \\
(b) $v(-a_K) = 0$,\\
(c) $Kv'(a_K)v(-a_K)+(N-K)v(a_K)v'(-a_K)-\alpha v(a_K)v(-a_K)=0$.\\
Moreover, $\lambda \in \sigma_p(L_+)$ has mutliplicity $K-1$ in the case (a), $N-K-1$ in the case (b), and is simple in the case (c). If $\lambda$ satisfies several cases, then its multiplicity is the sum of the multiplicities of each case.
\end{lemma}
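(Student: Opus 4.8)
The plan is to reduce the eigenvalue problem for $L_+$ on all of $\Gamma$ to a finite-dimensional linear problem at the vertex, and then to read off the multiplicity as the dimension of the kernel of that system by a short case analysis. The reduction rests on Lemma \ref{solhyp}: since an eigenvalue $\lambda<\omega$ lies below $\sigma_c(L_+)=[\omega,\infty)$, any eigenfunction $U=(u_1,\dots,u_N)^T$ must be square-integrable, so each component decays as $x\to+\infty$. On a bump edge $j\in\{1,\dots,K\}$ the profile is $\phi_\omega(x+a_K)$, so $\Phi_{\omega,K}^{2p}=\omega\,\sech^2(p\sqrt\omega(x+a_K))$, and after the shift $\tilde x=x+a_K$ the equation $L_+u_j=\lambda u_j$ becomes exactly (\ref{eqofcomp}); the same holds on a tail edge with the shift $\tilde x=x-a_K$. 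Because the decaying solution is unique up to a scalar, we must have
\[
u_j(x)=c_j\,v(x+a_K;\lambda)\ \ (j\le K),\qquad u_j(x)=d_j\,v(x-a_K;\lambda)\ \ (j>K),
\]
so the eigenfunction is determined by a vector $(c_1,\dots,c_K,d_{K+1},\dots,d_N)\in\mathbb{R}^N$, and the multiplicity of $\lambda$ (geometric equals algebraic since $L_+$ is self-adjoint) is the dimension of the space of such vectors obeying the vertex conditions in (\ref{H2}).

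Next I would set up the vertex system. Writing $A=v(a_K;\lambda)$, $A'=v'(a_K;\lambda)$, $B=v(-a_K;\lambda)$, $B'=v'(-a_K;\lambda)$, one has $u_j(0)=c_jA,\ u_j'(0)=c_jA'$ for $j\le K$ and $u_j(0)=d_jB,\ u_j'(0)=d_jB'$ for $j>K$. Continuity forces all $u_j(0)$ to equal a common value $h$, while the derivative condition reads $A'\sum_{j\le K}c_j+B'\sum_{j>K}d_j=\alpha h$. The one fact I would isolate before the case split is that every zero of $v(\cdot;\lambda)$ is simple: if $v$ and $v'$ vanished together, uniqueness for (\ref{eqofcomp}) would force $v\equiv0$. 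Hence $A=0\Rightarrow A'\neq0$ and $B=0\Rightarrow B'\neq0$, which is exactly what makes the degenerate cases collapse cleanly.

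The case analysis then proceeds as follows. If $A\neq0$ and $B\neq0$, continuity gives $c_j=h/A$ and $d_j=h/B$, and the derivative condition becomes $h\big(KA'/A+(N-K)B'/B-\alpha\big)=0$; a nontrivial solution exists iff $Kv'(a_K)v(-a_K)+(N-K)v(a_K)v'(-a_K)-\alpha v(a_K)v(-a_K)=0$, which is (c), and the eigenspace is then one-dimensional, i.e.\ simple. If $A=0$ with $K\ge1$, continuity forces $h=0$, hence $d_j=0$ (as $B\neq0$), leaving $c_1,\dots,c_K$ free subject only to $A'\sum_{j\le K}c_j=0$, i.e.\ $\sum c_j=0$ since $A'\neq0$; this is a $(K-1)$-dimensional space, giving case (a). The case $B=0$ is symmetric and gives the $(N-K-1)$-dimensional space of case (b). When $A=B=0$, continuity is automatic and the single derivative relation $A'\sum_{j\le K}c_j+B'\sum_{j>K}d_j=0$ cuts $\mathbb{R}^N$ down to dimension $N-1$; one checks (a), (b), (c) all hold and $(K-1)+(N-K-1)+1=N-1$. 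Finally I would verify that no two conditions overlap without the third (e.g.\ $A=0$ together with (c) forces $KA'B=0$, hence $B=0$), which is precisely what validates the ``sum of multiplicities'' statement in every configuration.

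The routine part is the reduction of the first paragraph; the delicate part is the dimension bookkeeping of the third. The main obstacle is matching the kernel dimensions exactly to $K-1$, $N-K-1$ and $1$ while tracking the overlaps so that the multiplicities genuinely add rather than double-count, and handling the boundary value $K=0$: there case (a) is vacuous, yet the factor $v(a_0)$ still appears in condition (c), so one must argue that $v(a_0)\neq0$ at the relevant $\lambda$ to avoid a spurious root of (c). The simplicity-of-zeros observation from Lemma \ref{solhyp} is the key lever that forces the degenerate configurations into the stated form.
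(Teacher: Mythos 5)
Your proposal is correct and takes essentially the same approach as the paper: both reduce the eigenvalue problem for $L_+$ to a homogeneous linear system at the vertex by using the unique exponentially decaying solution of Lemma \ref{solhyp} on each (shifted) edge. The only difference is in the final linear algebra — you compute the kernel dimension by direct case analysis (including the overlap cases), whereas the paper obtains the determinant of the associated $N\times N$ matrix by column operations and states that the multiplicities ``come directly from the linear system''; your bookkeeping is, if anything, more explicit on that last point.
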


\begin{proof}
First, assume $K \geq 1$. Denote $a: = a_K$. Let $\lambda$ be the eigenvalue of $L_+$ with the eigenvector 
$U=(u_1,...,u_N)^T \in \mathcal{D}(\Delta)$. Then, in the eigenvalue problem 
$L_+ U = \lambda U$, each component can be written as the second order differential equation
\begin{equation}
\label{eqofcompLplus}
-u_j''(x) + \omega u_j(x) - (2p+1)(p+1)\omega \sech^2 \big(p\sqrt{\omega}
(x+(-1)^{m_j} a) \big) u_j(x) = \lambda u_j(x), \quad x\in (0, \infty),
\end{equation}
where 
$$m_j= \begin{cases} 0 \quad {\rm for} \quad j=1,...,K, \\ 
1 \quad {\rm for} \quad j=K+1,...,N. \end{cases}
$$
The substitution $u_j(x) = c_j v(x + (-1)^{m_j}a)$ with coefficient $c_j$ transforms (\ref{eqofcompLplus}) into (\ref{eqofcomp}). By Sobolev embedding of $H^2(\mathbb{R}^+)$ into $C^1(\mathbb{R}^+)$, $u_j(x) \to 0$ and $u_j'(x) \to 0$ as $x \to +\infty$ for each $j=1,...,N$. To satisfy the latter condition, we let $v$ to be the solution to (\ref{eqofcomp}) given by Lemma \ref{solhyp}, and so
\begin{equation}
\label{components}
u_j(x) = \begin{cases} c_j v(x+a), & j=1,\dots,K, \\
c_j v(x-a), & j=K+1,\dots,N.
\end{cases}
\end{equation}
The boundary conditions for $U \in \mathcal{D}(\Delta)$ in (\ref{H2})
imply the homogeneous linear system on the coefficients
\begin{equation}
\label{lin-system-1}
c_1 v(a) = \dots = c_K v(a) =c_{K+1} v(-a) = \dots = c_N v(-a), \quad 
{\rm and}
\end{equation}
\begin{equation}
\label{lin-system-2}
\sum_{j=1}^{K}c_j v'(a)+ \sum_{j=K+1}^N c_j  v'(-a) = \alpha c_N v(-a).
\end{equation}
The associated matrix is
\begin{equation}
{\small M=
\left( \begin{array}{ccccccccc}
v(a) & -v(a) & 0 & \dots & 0 & 0 & \dots & 0 & 0 \\
v(a) & 0 & -v(a) & \dots & 0 & 0 & \dots & 0 & 0 \\
\vdots & \vdots & \vdots & \ddots & \vdots & \vdots & \ddots & \vdots & \vdots \\
v(a) & 0 & 0 & \dots & -v(a) & 0 & \dots & 0 & 0 \\
v(a) & 0 & 0 & \dots & 0 & -v(-a) & \dots & 0 & 0 \\
\vdots & \vdots & \vdots & \ddots & \vdots & \vdots & \ddots & \vdots & \vdots \\
v(a) & 0 & 0 & \dots & 0 & 0 & \dots & 0 & v(-a) \\
v'(a) & v'(a) & v'(a) & \dots & v'(a) & v'(-a) & \dots & v'(-a) &
v'(-a) - \alpha v(-a) \\
\end{array} \right)
}
\end{equation}
Doing elementary column operations, we can obtain a lower triangular matrix with the determinant
$$
{\rm det} M = v(a)^{K-1} v(-a)^{N-K-1} 
\left[ Kv'(a)v(-a)+(N-K)v'(-a)v(a) - \alpha v(a)v(-a) \right].
$$
Therefore, $U \neq 0$ is the eigenvector of $L_+$ for the eigenvalue $\lambda \in (-\infty,\omega)$
if and only if ${\rm det} M = 0$, or equivalently, at least one of the conditions (a), (b), (c) is true. 
The multiplicity of $\lambda$ in cases 
(a)-(c) comes directly from the linear system (\ref{lin-system-1})--(\ref{lin-system-2}).

For $K=0$, the boundary conditions (\ref{lin-system-1})-(\ref{lin-system-2}) do not contain terms $v(a)$ and $v'(a)$, and the determinant of the associated matrix $M$ becomes 
$$
{\rm det} M = v(-a)^{N-1} \left[Nv'(-a) - \alpha v(-a) \right].
$$
Then, ${\rm det} M = 0$ if and only if at least one of the conditions (b), (c) is true. 
\end{proof}

\begin{lemma}
\label{lastlemma}
Let $\alpha \in \mathbb{R} \backslash \{0\}$, $K \geq 0$ and $v$ be the solution to (\ref{eqofcomp}) given by Lemma \ref{solhyp}. Consider the  function of $\lambda$ as
\begin{equation}
\label{F}
F(\lambda) := K\frac{v'(a_K; \lambda)}{v(a_K;\lambda)} + 
(N-K) \frac{v'(-a_K; \lambda)}{v(-a_K; \lambda)}: (-\infty, 0] \to \mathbb{R}. 
\end{equation}
Then, the following hold: \\
$\bullet$ $v(|a_K|; \lambda)>0$ for all $\lambda \in (-\infty, 0]$, and there exists unique $\lambda_* \in (-\infty, 0]$ such that $v(-|a_K|;\lambda) = 0$. Moreover, $\lambda_*<0$. \\
$\bullet$ if $\alpha<0$ with $K \geq 1$, then $F(\lambda) = \alpha$ has exactly two solutions $\lambda_1 < \lambda_2$. Moreover, 
$\lambda_1 < \lambda_* < \lambda_2 < 0$. \\
$\bullet$ if $\alpha>0$ with $K \geq 0$ then $F(\lambda) = \alpha$ has the unique root $\lambda_1$, and, moreover, $\lambda_1<\lambda_*$.
\end{lemma}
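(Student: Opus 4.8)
The plan is to reduce everything to the monotonicity in $\lambda$ of the two ``Weyl ratios'' $P(\lambda) := v'(y;\lambda)/v(y;\lambda)$ and $Q(\lambda) := v'(-y;\lambda)/v(-y;\lambda)$, where I abbreviate $\beta := p\sqrt{\omega}$ and $y := |a_K| > 0$, so that the defining relation for $a_K$ reads $\tanh(\beta y) = |\alpha| / \big( (N-2K)\sqrt{\omega} \big) =: t \in (0,1)$ (the bound $t<1$ being exactly the admissibility condition $\omega > \alpha^2/(N-2K)^2$). Since $a_K < 0$ for $\alpha<0$ and $a_K>0$ for $\alpha>0$, the function $F$ equals $KQ + (N-K)P$ when $\alpha<0$ and $KP + (N-K)Q$ when $\alpha>0$; the whole asymmetry between the two cases will ultimately come from this role swap of $P$ and $Q$.

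The first key step is a monotonicity identity. Writing $\tilde{v} := \partial_\lambda v$ and using that $v,\tilde{v}$ decay at $+\infty$ (Lemma \ref{solhyp} and the Volterra argument in Lemma \ref{Sturm}), I would differentiate equation (\ref{eqofcomp}) in $\lambda$ and form the Wronskian $W(x) := v\tilde{v}' - v'\tilde{v}$; the two equations give $W'(x) = -v(x)^2$, and integrating from $x$ to $+\infty$ (where $W$ vanishes) yields $W(x) = \int_x^\infty v(s;\lambda)^2 \diff s > 0$. Hence $\partial_\lambda\!\big(v'(x;\lambda)/v(x;\lambda)\big) = W(x)/v(x)^2 > 0$ wherever $v(x;\lambda)\neq 0$, so both $P$ and $Q$ are strictly increasing on every $\lambda$-interval on which the relevant denominator does not vanish. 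Combined with Lemma \ref{solhyp} (which gives $v'(x_0)/v(x_0)\to -\infty$ as $\lambda\to-\infty$), this already forces $P(\lambda),Q(\lambda)\to -\infty$ as $\lambda\to-\infty$.

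Next I would settle the first bullet. For part (a), since $\lambda=0$ is by Lemma \ref{lem-Sturm} the second eigenvalue with odd eigenfunction $v(\cdot;0)\propto \sech^{1/p}(\beta x)\tanh(\beta x)$ vanishing only at $x=0$, and since zeros of $v$ move strictly to the right as $\lambda$ increases (Lemma \ref{Sturm}) while the solution stays non-oscillatory at $+\infty$ for $\lambda<\omega$, no zero of $v(\cdot;\lambda)$ lies in $(0,\infty)$ for any $\lambda\le 0$; thus $v(y;\lambda)>0$ on $(-\infty,0]$ and $P$ is smooth and strictly increasing there. For part (b), set $g(\lambda):=v(-y;\lambda)$: it is positive as $\lambda\to-\infty$ (no zeros) and $g(0)=v(-y;0)<0$ by the explicit odd profile, so the intermediate value theorem gives a zero $\lambda_*\in(-\infty,0)$, and uniqueness follows because at any such point the single zero of $v(\cdot;\lambda)$ passes through $-y$ and does so monotonically in $\lambda$ (Lemma \ref{Sturm}). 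This also pins down $Q$: it increases strictly from $-\infty$ to $+\infty$ on $(-\infty,\lambda_*)$ and from $-\infty$ to the finite value $Q(0)$ on $(\lambda_*,0]$, the blow-up directions being forced by the monotonicity.

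Finally, I would run the counting argument on $(-\infty,\lambda_*)$ and $(\lambda_*,0]$ separately, using that a sum of strictly increasing functions is strictly increasing. On $(-\infty,\lambda_*)$, $F$ increases from $-\infty$ to $+\infty$ (the coefficient of $Q$ is $\ge 1$ in both cases), so $F=\alpha$ has exactly one root $\lambda_1<\lambda_*$. On $(\lambda_*,0]$, $F$ increases from $-\infty$ to $F(0)$, so the existence of a second root is decided entirely by the sign of $F(0)-\alpha$. Using $Q(0)=-P(0)$ (oddness of $v(\cdot;0)$) and the explicit value $P(0)=\sqrt{\omega}\big(\tfrac{p}{t}-(p+1)t\big)$, a short computation gives $F(0)-\alpha = \pm(N-2K)\sqrt{\omega}\,p\,\tfrac{1-t^2}{t}$, with the $+$ sign for $\alpha<0$ and the $-$ sign for $\alpha>0$. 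Since $0<t<1$ and $N-2K>0$, this is positive exactly when $\alpha<0$: in that case ($K\ge 1$) a second root $\lambda_2\in(\lambda_*,0)$ appears, yielding $\lambda_1<\lambda_*<\lambda_2<0$, while for $\alpha>0$ one has $F(0)<\alpha$ and no second root, leaving only $\lambda_1<\lambda_*$. I expect the main obstacle to be the rigorous bookkeeping of the singularity of $Q$ at $\lambda_*$ — establishing the exact one-sided limits and the at-most-one-zero property of $v$ on $(-\infty,0]$ — rather than the closing sign computation, which is the short but decisive step that separates the attractive and repulsive cases.
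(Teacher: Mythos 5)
Your proposal is correct and follows essentially the same route as the paper's proof: the same monotonicity identity $\partial_\lambda\big(v'/v\big) = v^{-2}\int_x^\infty v^2\,dx > 0$, the same use of Lemmas \ref{Sturm}--\ref{lem-Sturm} to track the single zero of $v$ and locate the simple pole of $F$ at $\lambda_*$, the same limits as $\lambda \to -\infty$ and $\lambda \to \lambda_*^{\pm}$, and the same interval-by-interval root count closed by the sign of $F(0)-\alpha$, your expression $\pm(N-2K)\sqrt{\omega}\,p\,(1-t^2)/t$ being algebraically equivalent to the paper's $p\big(\alpha^2-(N-2K)^2\omega\big)/\alpha$. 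The only cosmetic differences are the Wronskian phrasing of the monotonicity computation and the intermediate-value argument for the existence of $\lambda_*$ in place of the paper's monotone zero map $x_0(\lambda)$.
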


\begin{proof}
Since $v$ is the nonzero solution of the second order differential equation (\ref{eqofcomp}), it has only simple zeros which, according to Lemma \ref{Sturm}, are monotonically increasing functions of $\lambda$. At $\lambda = \lambda_0$, by Lemma \ref{lem-Sturm}, we have positive even $v(x; \lambda_0)$ exponentially decaying as $|x| \to \infty$. Therefore, $v(x;\lambda)$ has the only zero $x_0(\lambda)$ which bifurcates from $x=-\infty$ at $\lambda=\lambda_0$ and moves strictly monotonically towards $x=0$ as $\lambda \to 0$ with $x_0(0)=0$, see Figure \ref{fig-3}. As a result, for $\lambda\leq \lambda_0$, $v(x;\lambda)$ is positive on the entire real line, whereas, for $\lambda \in (\lambda_0, 0]$, $v(x;\lambda)$ is positive for every 
$x \in (x_0(\lambda), \infty)$ and $v(x_0(\lambda);\lambda)=0$. We denote $\lambda$ satisfying $x_0(\lambda) = -|a_K|$ as $\lambda_*$. Since $a_K \neq 0$, then $\lambda_*<0$.
 The uniqueness of $\lambda_*$ is guaranteed by the monotonicity of $x_0(\lambda)$. This proves the first assertion of this Lemma. 

By Lemma \ref{solhyp}, $v(\pm a_K;\lambda)$ is a $C^1$ function of $\lambda$ for $\lambda \leq 0$. Therefore, using the first assertion proven above, for $\alpha<0$ with $K\geq 1$ and for $\alpha>0$ with $K\geq 0$, 
$F(\lambda)$ is $C^1((-\infty, 0])\backslash \{\lambda_*\})$ and has a simple pole at $\lambda = \lambda_*$. 

To investigate the behaviour of the function $F$, we first show that $F$ is a monotonically increasing function. 
Differentiating the equation (\ref{eqofcomp}) in $\lambda$, multiplying it by $v$ and integrating by parts on $[c, \infty]$ for some $c \in \mathbb{R}$, we get
$$
P(c): = \frac{\partial_\lambda v'(c) v(c) - v'(c) \partial_\lambda(c)}{v^2(c)} = \frac{1}{v^2(c)} \int_{c}^\infty v^2(x) dx > 0 \quad {\rm if} \quad v(c) \neq 0.
$$
Therefore, $F'(\lambda) = K P(a_K) + (N-K) P(-a_K) > 0$ for all $\lambda \in (-\infty, 0]\backslash \{\lambda_*\}$. 

By Lemma \ref{solhyp}, for every $c \in \mathbb{R}$, $\lim_{\lambda \to -\infty} \frac{v'(c;\lambda)}{v(c;\lambda)} = -\infty $. 
Then, taking $c = \pm a_K$, we have that 
$\lim_{\lambda \to -\infty} F(\lambda) = -\infty$. 
 By Lemma \ref{Sturm} on the monotonicity of a simple zero of $v$, the behaviour of $F(\lambda)$ around the point of singularity $\lambda_*$ is given by
$$
\lim_{\lambda \to \lambda_*^-} F(\lambda) = +\infty \quad {\rm and} 
\quad \lim_{\lambda \to \lambda_*^+} F(\lambda) = -\infty.
$$
At $\lambda=0$, the unique solution $v = v(x;0)$ of (\ref{eqofcomp}) in Lemma \ref{solhyp} is known to be $v(x) = -C\phi'_\omega(x)$, where $\phi_\omega$ is given by Lemma \ref{stationary} and $C = 2^{-1/p}\omega^{-(1+p)/2p}$. Then, using the explicit formulations of $v$ and $a_K$, direct computations give 
$$
F(0) = \frac{p(\alpha^2-(N-2K)^2 \omega)}{\alpha} + \alpha. 
$$
Since $\omega> \frac{\alpha^2}{(N-2K)^2}$, then 
$p(\alpha^2-(N-2K)^2 \omega)<0$. Hence, for $\alpha<0$ we have $F(0)>\alpha$, whereas for $\alpha>0$ we have $F(0)<\alpha$. As a result,  for every $\alpha \neq 0$, the equation $F(\lambda) = \alpha$ has a unique root $\lambda_1 \in (-\infty, \lambda_*)$. Moreover, for $\alpha<0$, there is an additional root $\lambda_2$ which is unique in $(\lambda_*, 0)$. This proves the remaining assertions of this Lemma.
\end{proof}

\begin{remark}
\label{lastremark}
In case of $\alpha<0$ with $K=0$, similar steps as in the proof of Lemma \ref{lastlemma} lead to the conclusion that $F(\lambda)$ is $C^1(-\infty, 0])$ and monotonically increasing with $\lim_{\lambda \to -\infty} F(\lambda) = -\infty$ and $F(0)>\alpha$. Then, $F(\lambda)=\alpha$ has the unique root $\lambda_1 \in (-\infty, 0)$. Consequently, $n(L_+) = 1$ and Theorem \ref{thm1} follows by a standard application of the orbital stability Theorem, see \cite{ACDD} and \cite{ACDD2}.
\end{remark}

\begin{proof1}{\em of Theorem \ref{thm1}.}
The proof of the Part (i) is based on the same arguments as in 
Proposition 6.1 in \cite{ACDD}, Theorem 4.1 in \cite{PG} and Lemma 3.1 in \cite{KP1}. Thus, we only concerned about the proof of the Part (ii).

Let $\hat \lambda \in \sigma_p(L_+) \cap (-\infty, 0]$ be a nonpositive eigenvalue of $\sigma_p(L_+)$ with the eigenvector 
$U \in \mathcal{D}(\Delta)$. Then, by Lemma \ref{lem-eig} at least one of the conditions {\it (a), (b), (c)} must be satisfied by $v(x; \hat \lambda)$. 

Consider $\alpha<0$ with $K\geq 1$ or $\alpha>0$ with $K\geq 0$.  Recall that $a_K<0$ for negative $\alpha$, and $a_K>0$ for positive $\alpha$. Then, for $\alpha<0$ with $K\geq 1$, by Lemma \ref{lastlemma}, the part (a) of Lemma \ref{lem-eig} is satisfied for unique $\lambda_* \in (-\infty, 0]$ and the part (b) is never true. For $\alpha>0$ with $K\geq 0$, the part (a) is never true and the part (b) is satisfied for unique $\lambda_{**} \in (-\infty, 0]$. 

It remains to consider the part (c) of Lemma \ref{lem-eig}, namely, to find all values $\hat \lambda \in (-\infty, 0]$ such that $v(x) = v(x; \hat \lambda)$ will satisfy
\begin{equation}
\label{part_c}
Kv'(a_K)v(-a_K)+(N-K)v'(-a_K)v(a_K) - \alpha v(a_K)v(-a_K) = 0
\end{equation}
Since $v(|a_K|) \neq 0$, and $v'(-|a_K|) \neq 0$ if $v(-|a_K|)=0$, the eigenvalue $\lambda = \lambda_*$ (resp. $\lambda = \lambda_{**}$) is not a solution of (\ref{part_c}). Therefore, all solutions $\hat \lambda$ of (\ref{part_c}) coincide with all solutions of $F(\lambda)=\alpha$, where $F$ is given by (\ref{F}). The last two assertions of Lemma \ref{lastlemma} complete the proof of Theorem \ref{thm1}.

For the case $\alpha<0$ with $K=0$, Remark \ref{lastremark} implies that both parts (a) and (b) of Lemma \ref{lem-eig} are never true, and the remaining part (c) has a unique root $\lambda_1 \in (-\infty, 0]$ with $\lambda_1<0$. Thus, 
$z(L_+)=0$ and $n(L_+) = 1$ are proved. 

\end{proof1}

\section*{Acknowledgement}
The author is grateful to Dmitry Pelinovsky for enlightening discussions and helpful suggestions during the preparation of this work.

\bibliographystyle{amsplain}

\end{document}